\numberwithin{equation}{section}
\newtheoremstyle{mytheoremstyle}{7pt}{7pt}{\normalfont}{}{\normalfont\bfseries}{:}{.5em}{}
\theoremstyle{mytheoremstyle}
\newtheorem{definition}{Definition}[section]
\newtheorem{lemma}[definition]{Lemma}
\newtheorem{proposition}[definition]{Proposition}
\newtheorem{theorem}[definition]{Theorem}
\newtheorem{corollary}[definition]{Corollary}
\newtheorem{example}[definition]{Example}
\newtheorem{remark}[definition]{Remark}
\newcommand{\SetFont}[1]{\mathbb{#1}}
\newcommand{\setN}{\SetFont{N}}
\newcommand{\setZ}{\SetFont{Z}}
\newcommand{\setR}{\SetFont{R}}
\newcommand{\Ker}{\operatorname{Ker}}
\newcommand{\smooth}{\mathcal{C}^\infty}
\newcommand{\anchor}{\mathsf{a}}
\newcommand{\dif}{\mathrm{d}}
\newcommand{\difDeRham}{\bm{\dif}}
\newcommand{\opeLie}{\mathcal{L}}
\newcommand{\opeLieDeRham}{\bm{\opeLie}}
\newcommand{\opeIns}{\iota}
\newcommand{\opeInsDeRham}{\bm{\opeIns}}
\newcommand{\End}{\operatorname{End}}
\newcommand{\Hom}{\operatorname{Hom}}
\begin{document}

\title{\bfseries Twisted cohomology of Lie algebroids}
\author{Benjamin \textsc{Couéraud}\\ LAREMA -- UMR CNRS 6093 -- FR CNRS 2962\\ Université d'Angers -- UBL\\ 2 boulevard Lavoisier, 49045 Angers Cedex 01, France\\ \href{mailto:coueraud@math.univ-angers.fr}{\texttt{coueraud@math.univ-angers.fr}}}
\date{}
\maketitle

\begin{abstract}
	In this short note we define a new cohomology for a Lie algebroid $\mathcal{A}$, that we call the \emph{twisted cohomology} of $\mathcal{A}$ by an odd cocycle $\theta$ in the Lie algebroid cohomology of $\mathcal{A}$. We proof that this cohomology only depends on the Lie algebroid cohomology class $[\theta]$ of the odd cocycle $\theta$. We give a few examples showing that this new cohomology encompasses various well-known cohomology theories.
\end{abstract}

\tableofcontents

\addcontentsline{toc}{section}{Introduction}
\section*{Introduction}

Lie algebroids are objects in differential geometry that generalize both Lie algebras and tangent bundles of smooth manifolds (see the definition \ref{def:Lie-algebroid}). They appear in many situations, ranging from classical geometric classification problems (see \cite{MR2503824} for instance) to Poisson geometry (see \cite{MR2455155}) and foliations (see \cite{MR2012261}), but not exclusively. Importantly, a Lie algebroid is the infinitesimal version of a Lie groupoid (see \cite{MR0216409}). To any Lie algebroid is associated a cohomology algebra, which is a generalization of both the Chevalley-Eilenberg cohomology of a Lie algebra and the De Rham cohomology of a manifold. After reviewing some definitions in section \ref{sec:basics}, we define in section \ref{sec:twisted} a new cohomology, that we will call the \emph{twisted} cohomology of a Lie algebroid, using an odd differential form $\theta$ that is $\dif$-closed, where $\dif$ is the differential of the usual Lie algebroid cohomology. We show in the corollary \ref{co:independence} that this new cohomology only depends on the (Lie algebroid) cohomology class $[\theta]$ of $\theta$, and recover examples of the literature in section \ref{subsec:twisted-examples}, in particular the \emph{twisted} De Rham cohomology of a smooth manifold \cite{MR2838268}.

\section{Lie algebroids}
\label{sec:basics}

In this section, we briefly recall the notion of a Lie algebroid, its cohomology, including the one with \emph{coefficients}, obtained from a representation, and give some examples. All manifolds that we consider are \emph{smooth} in the sense of \cite[chapter 1]{MR2954043}, and all vector bundles are \emph{smooth}, \emph{real} and of \emph{constant rank} (see \cite[definition 1.1, chapter 3]{MR1249482}). The usual interior product, De Rham differential, and Lie derivative, operating on differential forms of a manifold, will be typed in bold type ($\opeInsDeRham_X$, $\difDeRham$ et $\opeLieDeRham_X$, where $X\in\mathfrak{X}(M)$ is a vector field on the manifold $M$).

\subsection{Definition and examples of Lie algebroids}

\begin{definition}
	Let $M$ be a manifold. An \emph{anchor} on a vector bundle $A\to M$ is vector bundle morphism $\anchor:A\to TM$ covering the identity, where $TM\to M$ denotes the tangent bundle of $M$.
\end{definition}

\begin{definition}[{\cite[definition 3.3.1]{MR2157566}}]\label{def:Lie-algebroid}
	Let $M$ be a manifold. A \emph{Lie algebroid} $\mathcal{A}$ is a triple $(A\to M,\anchor,[\cdot,\cdot])$, where $A\to M$ is a vector bundle on $M$, $\anchor$ is an anchor on $A\to M$, and $[\cdot,\cdot]$ is a $\setR$-bilinear operation on the $\smooth(M)$-module $\Gamma(A)$ of sections of $A\to M$ called the \emph{bracket}, such that $(\Gamma(A),[\cdot,\cdot])$ is a Lie algebra and a \emph{right Leibniz rule} is satisfied:
	\begin{equation*}
		[u,fv]=f[u,v]+(\anchor(u)\cdot f)v,
	\end{equation*}
	for all $u,v\in\Gamma(A)$ and $f\in\smooth(M)$.
\end{definition}

\begin{remark}
	From the definition above, a \emph{left} Leibniz rule can be obtained using the skew-symmetry of the bracket $[\cdot,\cdot]$, that is,
	\begin{equation*}
		[fu,v]=f[u,v]-(\anchor(v)\cdot f)u,
	\end{equation*}
	for all $u,v\in\Gamma(A)$ and $f\in\smooth(M)$.
\end{remark}

\begin{proposition}[{\cite[section 6.1]{Kosmann1990}}]
	Let $\mathcal{A}=(A\to M,\anchor,[\cdot,\cdot])$ be a Lie algebroid. The anchor $\anchor$ induces a morphism of Lie algebras $\Gamma(A)\to\mathfrak{X}(M)$, still denoted by $\anchor$, where $\mathfrak{X}(M)$ is equipped with the Lie bracket of vector fields on the manifold $M$.
\end{proposition}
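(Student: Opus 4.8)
The statement amounts to the identity $\anchor([u,v]) = [\anchor(u),\anchor(v)]$ for all $u,v\in\Gamma(A)$, where the right-hand bracket is the commutator of vector fields. The plan is to prove this by testing both sides against an arbitrary smooth function, since vector fields are determined by their action as derivations on $\smooth(M)$. The whole argument is an extraction of the anchor's compatibility with the bracket out of the Jacobi identity, combined with the Leibniz rule; the cohomological machinery of later sections is not needed here.

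First I would introduce a third auxiliary section $w\in\Gamma(A)$ and a function $f\in\smooth(M)$, and compute the iterated bracket $[[u,v],fw]$ in two different ways. On one hand, a single application of the right Leibniz rule gives
\begin{equation*}
	[[u,v],fw] = f\,[[u,v],w] + \bigl(\anchor([u,v])\cdot f\bigr)\,w.
\end{equation*}
On the other hand, the Jacobi identity rewrites $[[u,v],w]$ as $[u,[v,w]]-[v,[u,w]]$, and more importantly rewrites $[[u,v],fw]$ as $[u,[v,fw]]-[v,[u,fw]]$. I would then expand each inner bracket $[v,fw]$ and $[u,fw]$ by the Leibniz rule, and expand once more. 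The key bookkeeping point is that the two first-order cross terms, namely $(\anchor(u)\cdot f)[v,w]$ and $(\anchor(v)\cdot f)[u,w]$, appear in both expansions and cancel upon subtraction, while the surviving second-order terms combine into $\bigl(\anchor(u)\cdot(\anchor(v)\cdot f)-\anchor(v)\cdot(\anchor(u)\cdot f)\bigr)w$, which is exactly $\bigl([\anchor(u),\anchor(v)]\cdot f\bigr)w$ by the definition of the vector-field commutator.

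Equating the two computations and cancelling the common term $f\,[[u,v],w]$ leaves
\begin{equation*}
	\bigl(\anchor([u,v])\cdot f\bigr)\,w = \bigl([\anchor(u),\anchor(v)]\cdot f\bigr)\,w.
\end{equation*}
To finish, I would argue that this forces the scalar coefficients to agree pointwise: for any point of $M$ one may choose a section $w$ that is nonvanishing there (local sections of a vector bundle exist and can be promoted to global ones by a bump function), so $\anchor([u,v])\cdot f = [\anchor(u),\anchor(v)]\cdot f$ as functions. Since $f\in\smooth(M)$ is arbitrary and a vector field is determined by its action on functions, the two vector fields coincide. The step I expect to be the most delicate is the multi-term expansion and the verification that precisely the right terms cancel; the concluding "choose a nonvanishing $w$" argument is routine but worth stating explicitly, since it is what upgrades an equality of sections to the desired equality of anchors.
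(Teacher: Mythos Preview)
Your argument is correct and is in fact the standard proof of this fact: apply the Jacobi identity to $[[u,v],fw]$, expand via the Leibniz rule, and compare with the direct Leibniz expansion of the same bracket. The cancellation you describe is exactly right, and the concluding step (choosing $w$ nonvanishing at a given point to strip off the coefficient) is the usual way to finish.

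As for comparison with the paper: the paper does not supply its own proof of this proposition. It is stated with a citation to \cite[section 6.1]{Kosmann1990} and left unproved in the text. So there is nothing to compare against beyond noting that your argument is the classical one underlying that reference. One very minor remark: rather than invoking a nonvanishing section $w$, many presentations simply observe that the difference $\anchor([u,v])-[\anchor(u),\anchor(v)]$, viewed as a map $(u,v)\mapsto X_{u,v}\in\mathfrak{X}(M)$, is $\smooth(M)$-bilinear (this follows from the same Leibniz/Jacobi manipulation you carried out) and hence is a tensor; since it also annihilates every function it must vanish. This avoids the pointwise choice of $w$, but your version is equally valid.
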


We now give some examples of Lie algebroids.

\begin{example}\label{ex:Lie-algebra}
	Lie algebras are in correspondence with Lie algebroids over a point.
\end{example}

\begin{example}
	Let $M$ be a manifold and $A\to M$ be a Lie algebra bundle. Then $A\to M$ can be equipped with a Lie algebroid structure as follows. Let $(\mathfrak{g},[\cdot,\cdot]_\mathfrak{g})$ be a typical fiber of $A\to M$, that is, a Lie algebra. Then, the Lie algebra bracket $[\cdot,\cdot]_\mathfrak{g}$ induces a bracket on the space of sections $\Gamma(A)$, defined by $[u,v]_x=[u_x,v_x]_\mathfrak{g}$ for all $u$, $v\in\Gamma(A)$ and $x\in M$. This bracket is $\smooth(M)$-bilinear since
	\begin{equation*}
		[fu,v]_x=\big[f(x)u_x,v_x\big]_\mathfrak{g}=f(x)[u_x,v_x]_\mathfrak{g}=\big(f[u,v]\big)_x,
	\end{equation*}
	for all $u$, $v\in\Gamma(A)$, $f\in\smooth(M)$ and $x\in M$. Therefore, $A\to M$ equipped with the null anchor and the bracket we just defined is a Lie algebroid. Lie algebra bundles are studied in detail in \cite[section 3.3]{MR2157566} and \cite{Gundogan}.
\end{example}

\begin{example}\label{ex:canonical}
	Let $M$ be a manifold. On the tangent bundle $TM\to M$ of $M$, there is a natural Lie algebroid structure given as follows. The anchor is the identity vector bundle morphism $TM\to TM$, and the bracket on $\Gamma(TM)=\mathfrak{X}(M)$ is the Lie bracket of vector fields on $M$ (see \cite[chapter 8]{MR2954043}). This Lie algebroid will be called the \emph{canonical} Lie algebroid on $M$, and will be denoted by $\mathcal{T}_M$.
\end{example}

\begin{example}\label{ex:foliation}
	Let $M$ be a manifold and $A\to M$ be an involutive distribution of $TM\to M$ (see \cite[chapter 19]{MR2954043}). Note that according to the global Frobenius theorem \cite[theorem 19.21]{MR2954043}, this distribution comes from a foliation of $M$. The vector bundle $A\to M$ is equipped with a Lie algebroid structure as follows. The anchor is the inclusion map $A\hookrightarrow TM$, and the bracket is the Lie bracket of vector fields on $M$ restricted to $\Gamma(A)$, which is well-defined thanks to the involutivity of the distribution.
\end{example}

\begin{example}\label{ex:Poisson}
	Let $M$ be a manifold and $\pi\in\Gamma(\Lambda^2 TM)$ a be bivector field such that $[\pi,\pi]_\textsf{SN}=0$ for the Schouten-Nijenhuis bracket \cite[section 3.3]{MR2906391}. Then $(M,\pi)$ is a Poisson manifold \cite[section 1.3.2]{MR2906391} and the cotangent bundle $T^*M\to M$ of $M$ can be given a Lie algebroid structure as follows (see \cite[section 6.2]{MR2455155}). The anchor $\anchor$ is defined by
	\begin{equation*}
		\anchor(\alpha)\cdot f=\pi(\alpha,\difDeRham f),
	\end{equation*}
	for all $\alpha\in\Omega^1(M)=\Gamma(T^*M)$ and $f\in\smooth(M)$. Writing $\pi^\sharp:\Omega^1(M)\to\mathfrak{X}(M)$ for the map defined by $\pi^\sharp(\alpha)(\beta)=\pi(\alpha,\beta)$, we get $\anchor=\pi^\sharp$. The bracket is defined by
	\begin{equation*}
		[\alpha,\beta]=\opeLieDeRham_{{\pi^\sharp}(\alpha)}\beta-\opeLieDeRham_{{\pi^\sharp}(\beta)}\alpha-\difDeRham\big(\pi(\alpha,\beta)\big),
	\end{equation*}
	for all $\alpha$, $\beta\in\Omega^1(M)$. Then the Jacobi identity for this bracket is equivalent to the identity $[\pi,\pi]_\textsf{SN}=0$, where $[\cdot,\cdot]_{\textsf{SN}}$ denotes the Schouten-Nijenhuis bracket (see \cite[section 3.3]{MR2906391}). We will denote by $\mathcal{P}_M[\pi]$ this Lie algebroid.
\end{example}

\begin{example}\label{ex:action}
	Let $M$ be a manifold and let $(\mathfrak{g},[\cdot,\cdot]_\mathfrak{g})$ be a Lie algebra acting (infinitesimally) on $M$ through a Lie algebra homomorphism $\rho:\mathfrak{g}\to\mathfrak{X}(M)$. The trivial vector bundle $M\times\mathfrak{g}$ can be equipped with a Lie algebroid structure (called an \emph{action Lie algebroid}, see \cite[example 3.3.7]{MR2157566}). The anchor is defined by $\anchor((x,\xi))=\rho(\xi)_x$, for any $x\in M$ and $\xi\in\mathfrak{g}$. The bracket is defined on sections $u$, $v\in\Gamma(M\times\mathfrak{g})\cong\smooth(M,\mathfrak{g})$ by
	\begin{equation*}
		[u,v]_x=[u_x,v_x]_{\mathfrak{g}}+\big[\rho(u_x)\cdot v\big]_x-\big[\rho(v_x)\cdot u\big]_x,
	\end{equation*}
	where the dot $\cdot$ in the right hand side denotes the component-wise action of a vector field.
\end{example}

Some other important examples have been omitted, but we still would like to briefly describe them hereafter. Similarly to Lie algebras that can be obtained by differentiation from a Lie group, starting with a Lie groupoid \cite[section 1.1]{MR2157566}, one can obtain a Lie algebroid by differentiation \cite[section 3.5]{MR2157566}. Nevertheless, there are obstructions to integrate a given Lie algebroid into a Lie groupoid \cite{Crainic2003}. The last example is actually the Lie algebroid of a Lie groupoid, the so-called \emph{action Lie groupoid} \cite[example 1.1.9]{MR2157566}, which gives the corresponding action Lie algebroid in the case the infinitesimal action integrates into a global one (see \cite[example 3.5.14]{MR2157566}). Finally, although we will not need it, we mention the so-called \emph{Atiyah Lie algebroid} (\cite{MR0086359}, \cite[section 3.2]{MR2157566}). Let $G$ be a Lie group and $\pi:P\to M$ be a principal $G$-bundle. Let $\overline{\difDeRham\pi}$ denote the quotient map $TP/G\to TM$. Taking $\overline{\difDeRham\pi}$ as anchor and the Lie bracket of ($G$-invariant) vector fields on the manifold $P$ as bracket, the vector bundle $TP/G\to M$ is a Lie algebroid, called the Atiyah Lie algebroid associated to $\pi:P\to M$. It is obtained by differentiation from the \emph{Ehresmann gauge groupoid} associated to the same principal $G$-bundle (see \cite{MR1129261} and the references within).

\subsection{Cohomology of Lie algebroids}

We now turn to the definition of Lie algebroid cohomology.

\begin{definition}
	Let $\mathcal{A}=(A\to M,\anchor,[\cdot,\cdot])$ be a Lie algebroid. We will denote by $\Omega^\bullet(\mathcal{A})$ the $\setZ$-graded commutative $\setR$-algebra $(\Gamma(\Lambda^\bullet A^*),\wedge)$. Elements of $\Omega^\bullet(\mathcal{A})$ will be called \emph{differential forms on $\mathcal{A}$}.
\end{definition}

\begin{definition}[{\cite[section 2]{MR1726784}, \cite[section 5.2]{MR2455155}}]\label{def:exterior-derivative}
	Let $\mathcal{A}=(A\to M,\anchor,[\cdot,\cdot])$ be a Lie algebroid. The \emph{exterior derivative} of $\mathcal{A}$ is the operator $\dif:\Omega^p(\mathcal{A})\to\Omega^{p+1}(\mathcal{A})$ defined by
	\begin{multline*}
		\dif\alpha(u_0,\dots,u_p)=\sum_{i=0}^p(-1)^i\anchor(u_i)\cdot\alpha(u_0,\dots,\widehat{u_i},\dots,u_p)\\
		+\sum_{0\leq i<j\leq p}(-1)^{i+j}\alpha\big([u_i,u_j],u_0,\dots,\widehat{u_i},\dots,\widehat{u_j},\dots,u_p\big),
	\end{multline*}
	for all $u_0,\dots,u_p\in\Gamma(A)$ and any $\alpha\in\Omega^p(\mathcal{A})$; where $\anchor(u_i)\cdot\alpha(u_0,\dots,\widehat{u_i},\dots,u_p)$ denotes the action of the vector field $\anchor(u_i)$ on the function $\alpha(u_0,\dots,\widehat{u_i},\dots,u_p)$. On functions $f\in\smooth(M)$, $\dif$ is defined by $(\dif f)(u)=\anchor(u)\cdot f$, for any $u\in\Gamma(A)$.
\end{definition}

The following proposition is a particular case of \cite[proposition 5.2.3, point 2]{MR2455155}.

\begin{proposition}[{\cite[proposition 5.2.3]{MR2455155}}]\label{pr:exterior-derivative-derivation}
	Let $\mathcal{A}=(A\to M,\anchor,[\cdot,\cdot])$ be a Lie algebroid. The exterior derivative $\dif$ of $\mathcal{A}$ is a derivation of degree 1 of $\Omega^\bullet(\mathcal{A})$ that squares to zero, that is, $\dif$ is a differential and $\Omega^\bullet(\mathcal{A})$ endowed with this differential becomes a differential graded commutative algebra (see \cite[part 1, chapter 3]{MR1802847} for the definition).
\end{proposition}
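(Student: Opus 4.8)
The plan is to verify the two defining properties separately: that $\dif$ obeys the graded Leibniz rule (so that it is a derivation of degree $1$) and that $\dif\circ\dif=0$. Before either, I would first check that $\dif$ is well defined, i.e.\ that for $\alpha\in\Omega^p(\mathcal{A})$ the formula of definition \ref{def:exterior-derivative} produces a genuine element of $\Omega^{p+1}(\mathcal{A})=\Gamma(\Lambda^{p+1}A^*)$. The alternating character in $u_0,\dots,u_p$ is immediate from the sign pattern, so the real point is $\smooth(M)$-multilinearity: replacing one argument $u_k$ by $fu_k$ with $f\in\smooth(M)$ and expanding, the extra terms produced by the derivation action of the anchor in the first sum must cancel those produced by the right and left Leibniz rules for the bracket in the second sum. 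This cancellation is exactly where the Lie algebroid axioms enter, and it guarantees that $\dif\alpha$ is tensorial, hence a section of $\Lambda^{p+1}A^*$.

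For the Leibniz rule, $\dif(\alpha\wedge\beta)=\dif\alpha\wedge\beta+(-1)^p\alpha\wedge\dif\beta$ with $\alpha\in\Omega^p(\mathcal{A})$, I would avoid a brute-force expansion of the full formula and instead reduce to generators. Since $\Omega^\bullet(\mathcal{A})=\Gamma(\Lambda^\bullet A^*)$ is generated as an $\setR$-algebra by its elements of degree $0$ and $1$ (locally through a frame of $A^*$, glued with a partition of unity), the standard propagation of the graded Leibniz identity from a generating set to the whole algebra reduces the claim to three base cases: $\dif(fg)=g\,\dif f+f\,\dif g$ for $f,g\in\smooth(M)$, which follows from the anchor acting as a derivation on functions; the mixed case of a function times a $1$-form; and the case of two $1$-forms. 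Only the last requires a short direct computation from the explicit formula, and the already established tensoriality of $\dif$ is what makes the reduction legitimate.

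Finally, for $\dif\circ\dif=0$, I would exploit that $\dif^2$ is the graded commutator of the degree-$1$ derivation $\dif$ with itself, hence a derivation of degree $2$; in particular, once it annihilates $\smooth(M)$ it is automatically $\smooth(M)$-linear, so it suffices to prove $\dif^2f=0$ for all $f\in\smooth(M)$ and then $\dif^2\alpha=0$ for $\alpha$ ranging over a local frame of $\Omega^1(\mathcal{A})$. The computation on functions unwinds to $[\anchor(u_0),\anchor(u_1)]\cdot f-\anchor([u_0,u_1])\cdot f$, which vanishes precisely because the anchor induces a morphism of Lie algebras $\Gamma(A)\to\mathfrak{X}(M)$, as recalled above. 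The computation on $1$-forms, evaluated on three sections $u_0,u_1,u_2$, collects into the Jacobi identity for the bracket $[\cdot,\cdot]$ together with the same homomorphism property of the anchor. I expect this last step to be the main obstacle: it is the only place where the full Jacobi identity is needed, and keeping track of the six permutations and their signs demands careful bookkeeping, even though every individual manipulation is elementary.
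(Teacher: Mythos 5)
The paper contains no proof of this proposition: it is stated with a citation and its justification is delegated entirely to \cite[proposition 5.2.3]{MR2455155}, so there is no internal argument to compare yours against. Judged on its own merits, your outline is correct, and it is essentially the standard argument one finds in the literature for this kind of statement.

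All three stages of your plan are sound: tensoriality of the Koszul-type formula (where the derivation terms coming from the anchor must cancel against the Leibniz rules of the bracket), propagation of the graded Leibniz rule from degree-$0$ and degree-$1$ generators, and the observation that $\dif^2$, being the graded self-commutator of a degree-$1$ derivation, is itself a derivation, so that it vanishes as soon as it kills $\smooth(M)$ and a local frame of $\Omega^1(\mathcal{A})$; the remaining computations then reduce exactly to the anchor being a morphism of Lie algebras and to the Jacobi identity, as you say. Two points deserve tightening in a full write-up. First, what legitimizes the reduction to a local frame is not tensoriality alone but \emph{locality} of $\dif$: if $\alpha$ vanishes on an open set $U$, then so does $\dif\alpha$. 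This follows directly from the defining formula (the first sum differentiates functions that vanish identically near the point, the second evaluates $\alpha$ at the point), or alternatively from the identity $\dif(f\alpha)=\dif f\wedge\alpha+f\,\dif\alpha$, which you should in any case establish first from the definition, since it is also what makes the partition-of-unity gluing work. Second, among the base cases of the induction you should record the pair consisting of a $1$-form and a function alongside the pair consisting of a function and a $1$-form; it follows from the latter by graded commutativity of the wedge product, but the induction that propagates the Leibniz rule does invoke it. Neither point is a genuine gap; both are sketch-level imprecisions that a complete proof would have to spell out.
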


\begin{definition}
	Let $\mathcal{A}=(A\to M,\anchor,[\cdot,\cdot])$ be a Lie algebroid. The \emph{cohomology} of $\mathcal{A}$ is defined as the homology of the differential graded commutative algebra $(\Omega^\bullet(\mathcal{A}),\dif)$ (see \cite[part 1, chapter 3]{MR1802847} for the definition).
\end{definition}

\begin{lemma}
	Let $\mathcal{A}=(A\to M,\anchor,[\cdot,\cdot])$ be a Lie algebroid. On $H^\bullet(\mathcal{A})$ there is a \emph{cup product} that gives $H^\bullet(\mathcal{A})$ the structure of a graded commutative algebra.
\end{lemma}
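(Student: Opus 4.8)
The plan is to define the cup product directly from the wedge product already available on $\Omega^\bullet(\mathcal{A})$, and to verify that it descends to cohomology. This is the standard argument that the cohomology of any differential graded commutative algebra inherits a graded commutative algebra structure, so I would lean entirely on proposition \ref{pr:exterior-derivative-derivation}, which tells us precisely that $(\Omega^\bullet(\mathcal{A}),\wedge,\dif)$ is such an algebra. Concretely, for two classes $[\alpha],[\beta]\in H^\bullet(\mathcal{A})$ represented by cocycles $\alpha,\beta\in\Omega^\bullet(\mathcal{A})$ (so $\dif\alpha=\dif\beta=0$), I would set $[\alpha]\smile[\beta]:=[\alpha\wedge\beta]$, and the work is to check that this is well defined.

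First I would confirm that $\alpha\wedge\beta$ is again a cocycle. Since $\dif$ is a derivation of degree $1$ by proposition \ref{pr:exterior-derivative-derivation}, for $\alpha\in\Omega^p(\mathcal{A})$ we have the Leibniz rule $\dif(\alpha\wedge\beta)=\dif\alpha\wedge\beta+(-1)^p\,\alpha\wedge\dif\beta$, whose right-hand side vanishes because both $\alpha$ and $\beta$ are closed. Next I would check independence of the chosen representatives. Replacing $\alpha$ by $\alpha+\dif\gamma$, the same Leibniz rule together with $\dif\beta=0$ gives $(\alpha+\dif\gamma)\wedge\beta=\alpha\wedge\beta+\dif(\gamma\wedge\beta)$, so the two products differ by an exact form and thus define the same class; the analogous computation works when $\beta$ is modified by a coboundary. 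Hence $\smile$ is well defined on $H^\bullet(\mathcal{A})$.

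It then remains to inherit the algebra axioms. Associativity, bilinearity over $\setR$, and the grading (the product of classes of degrees $p$ and $q$ lands in degree $p+q$) pass immediately to cohomology from the corresponding properties of $\wedge$ on $\Omega^\bullet(\mathcal{A})$, and the class of the constant function $1$ serves as a unit. Graded commutativity likewise follows from the identity $\alpha\wedge\beta=(-1)^{pq}\,\beta\wedge\alpha$ on representatives. I do not expect any serious obstacle here: the only step requiring care is the well-definedness, and that hinges entirely on the derivation (Leibniz) property of $\dif$ and on the fact that the error terms are exact precisely because the remaining factor is closed. The statement is therefore a purely formal consequence of the differential graded commutative algebra structure established in proposition \ref{pr:exterior-derivative-derivation}.
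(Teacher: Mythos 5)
Your proposal is correct and follows essentially the same route as the paper: define $[\alpha]\wedge[\beta]=[\alpha\wedge\beta]$ and use the derivation property of $\dif$ from proposition \ref{pr:exterior-derivative-derivation} to check closedness and independence of representatives. The only cosmetic difference is that you vary one representative at a time while the paper varies both simultaneously in a single formula; these are equivalent, and your additional remarks on associativity, unit, and graded commutativity are immediate at the chain level, as you say.
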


\begin{proof}
	The cup product comes directly from the wedge product on $\Omega^\bullet(\mathcal{A})$, it is defined on cohomology classes by $[\omega]\wedge[\eta]=[\omega\wedge\eta]$, for any $\omega\in\Omega^p(\mathcal{A})$ and $\eta\in\Omega^q(\mathcal{A})$ that are cocycles, that is, $\dif$-closed. The differential form $\omega\wedge\eta$ is $\dif$-closed since $\dif$ is a derivation according to proposition \ref{pr:exterior-derivative-derivation}. Also, the cup product does not depend on the choice of representation for the cocycles since, according to proposition \ref{pr:exterior-derivative-derivation} again, we have
	\begin{equation*}
		(\omega+\dif\alpha)\wedge(\eta+\dif\beta)=\omega\wedge\eta+\dif\big(\alpha\wedge\eta+(-1)^p\omega\wedge\beta+\alpha\wedge\dif\beta\big),
	\end{equation*}	 
	for any $\alpha\in\Omega^\bullet(\mathcal{A})$ and $\beta\in\Omega^\bullet(\mathcal{A})$.
\end{proof}

A detailed study of the cohomology of Lie algebroids, and especially the relationship between the cohomology of a Lie groupoid and the cohomology of its associated Lie algebroid, is available in \cite[chapter 7]{MR2157566}. We only explore below some simple examples.

\begin{example}
	Let $\mathcal{A}$ be the Lie algebroid associated to a Lie algebra $\mathfrak{g}$ (see example \ref{ex:Lie-algebra}). Then $H^\bullet(\mathcal{A})$ corresponds to the Chevalley-Eilenberg cohomology of $\mathfrak{g}$ (see \cite[section 14]{Chevalley1948}).
\end{example}

\begin{example}
	Let $M$ be a manifold and consider the canonical Lie algebroid $\mathcal{T}_M$ associated to $M$ (see example \ref{ex:canonical}). The cohomology $H^\bullet(\mathcal{T}_M)$ is the De Rham cohomology of $M$ (see \cite[chapter 17]{MR2954043}).
\end{example}

\begin{example}
	Let $M$ be a manifold and $A\to M$ be an involutive distribution, and consider the associated Lie algebroid $\mathcal{A}$ (see example \ref{ex:foliation}). Then $H^\bullet(\mathcal{A})$ corresponds to the tangential cohomology of the foliation associated to the involutive distribution (see \cite[chapter 3]{Moore1988} and \cite[section 1.1.3]{MR2319199}).
\end{example}

\begin{example}\label{ex:Lichnerowicz-Poisson-cohomology}
	Let $(M,\pi)$ be a Poisson manifold, and consider the associated Lie algebroid $\mathcal{P}_M[\pi]$. Then $H^\bullet(\mathcal{P}_M[\pi])$ corresponds to the Lichnerowicz-Poisson cohomology of $(M,\pi)$ (see \cite{MR0501133} and \cite[definition 5.1]{MR1269545}). Note that the differential can be identified with $-[\pi,\cdot]_\textsf{SN}$ (\cite[proposition 4.3]{MR1269545} and \cite[proposition 6.4]{Kosmann1990}).
\end{example}

\subsection{Representations of Lie algebroids}

The subject of this section is representations of Lie algebroids, which allow to consider \emph{coefficients} in Lie algebroid cohomology.

\begin{definition}
	Let $\mathcal{A}=(A\to M,\anchor,[\cdot,\cdot])$ be a Lie algebroid and $V\to M$ be a vector bundle. We will denote by $\Omega^\bullet(\mathcal{A},V)$ the $\setZ$-graded $\setR$-vector space $\Gamma(\Lambda^\bullet A^*\otimes V)$. Elements of $\Omega^\bullet(\mathcal{A},V)$ will be called \emph{differential forms on $\mathcal{A}$ with values in $V\to M$}. In degree 0, we recover sections of $V\to M$.
\end{definition}

The following result is immediate.

\begin{proposition}\label{pr:graded-module-structure}
	Let $\mathcal{A}$ be a Lie algebroid on a manifold $M$, and $V\to M$ be a vector bundle. $\Omega^\bullet(\mathcal{A},V)$ is a graded $\Omega^\bullet(\mathcal{A})$-module for the multiplication defined by $\alpha\wedge(\beta\otimes s)=(\alpha\wedge\beta)\otimes s$, for any $\alpha$, $\beta\in\Omega^\bullet(\mathcal{A})$ and $s\in\Gamma(V)$.
\end{proposition}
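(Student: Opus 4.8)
The plan is to reduce the verification to the algebra axioms of $\Omega^\bullet(\mathcal{A})$ by identifying $\Omega^\bullet(\mathcal{A},V)$ with a tensor product of $\smooth(M)$-modules. The starting point is the standard isomorphism of graded $\smooth(M)$-modules
\[
\Gamma(\Lambda^\bullet A^*\otimes V)\cong\Gamma(\Lambda^\bullet A^*)\otimes_{\smooth(M)}\Gamma(V)=\Omega^\bullet(\mathcal{A})\otimes_{\smooth(M)}\Gamma(V),
\]
which holds for any pair of (finite-rank) vector bundles over $M$: a decomposable section $\beta\otimes s$ on the left corresponds to the elementary tensor $\beta\otimes s$ on the right, and every global section is an $\smooth(M)$-linear combination of such. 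Under this identification the prescribed formula $\alpha\wedge(\beta\otimes s)=(\alpha\wedge\beta)\otimes s$ is exactly the left multiplication of the algebra $\Omega^\bullet(\mathcal{A})$ on the first tensor factor.

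First I would check that this action is well defined on the tensor product over $\smooth(M)$, which is the only point requiring attention. It suffices to see that the assignment respects the defining relation $(f\beta)\otimes s=\beta\otimes(fs)$ for $f\in\smooth(M)$: since the wedge product is $\smooth(M)$-bilinear we have $\alpha\wedge(f\beta)=f(\alpha\wedge\beta)$, and therefore
\[
\alpha\wedge\big((f\beta)\otimes s\big)=\big(f(\alpha\wedge\beta)\big)\otimes s=(\alpha\wedge\beta)\otimes(fs)=\alpha\wedge\big(\beta\otimes(fs)\big),
\]
so the map $\Omega^\bullet(\mathcal{A})\times\big(\Omega^\bullet(\mathcal{A})\otimes_{\smooth(M)}\Gamma(V)\big)\to\Omega^\bullet(\mathcal{A})\otimes_{\smooth(M)}\Gamma(V)$ is $\smooth(M)$-balanced and descends to the action claimed.

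Next I would verify the module axioms, all of which are inherited from the corresponding axioms of the graded commutative algebra $\Omega^\bullet(\mathcal{A})$. Bi-additivity is clear; associativity reduces on decomposables to the associativity of $\wedge$, namely $(\alpha_1\wedge\alpha_2)\wedge(\beta\otimes s)=(\alpha_1\wedge\alpha_2\wedge\beta)\otimes s=\alpha_1\wedge\big((\alpha_2\wedge\beta)\otimes s\big)$; and the unit $1\in\Omega^0(\mathcal{A})=\smooth(M)$ acts as the identity because $1\wedge\beta=\beta$. Finally, the grading is compatible because $\wedge$ adds degrees, so $\Omega^p(\mathcal{A})\wedge\Omega^q(\mathcal{A},V)\subseteq\Omega^{p+q}(\mathcal{A},V)$, giving the graded module structure.

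The only genuine obstacle is the first step, the identification $\Gamma(\Lambda^\bullet A^*\otimes V)\cong\Omega^\bullet(\mathcal{A})\otimes_{\smooth(M)}\Gamma(V)$; once this is granted (it is a standard consequence of local triviality together with a partition of unity, or of the Serre--Swan correspondence), everything else is formal and justifies the label \emph{immediate}.
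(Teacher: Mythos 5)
Your proof is correct, but note that the paper itself offers no argument for this proposition: it is introduced with the phrase ``The following result is immediate'' and no proof is given, so what you have written supplies details the author left entirely implicit. Your route is sound: you pass through the standard isomorphism $\Gamma(\Lambda^\bullet A^*\otimes V)\cong\Omega^\bullet(\mathcal{A})\otimes_{\smooth(M)}\Gamma(V)$, and you correctly isolate the one point that genuinely requires attention, namely that left wedging by $\alpha$ is $\smooth(M)$-balanced and hence descends to the tensor product; the remaining module axioms then reduce, as you say, to the algebra axioms of $\Omega^\bullet(\mathcal{A})$ on decomposables. What your approach costs is the appeal to the Serre--Swan-type identification (local triviality plus a partition of unity), which is standard but not free. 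A more economical alternative, and probably what the author had in mind in calling the statement immediate, is to define the multiplication fiberwise: for each $x\in M$ the exterior algebra $\Lambda^\bullet A_x^*$ acts on the graded vector space $\Lambda^\bullet A_x^*\otimes V_x$ by wedging in the first factor, this action is smooth in $x$ (in local frames it is given by wedge products of component forms), and the formula $\alpha\wedge(\beta\otimes s)=(\alpha\wedge\beta)\otimes s$ holds by construction. That route bypasses both the identification of $\Gamma(\Lambda^\bullet A^*\otimes V)$ with a tensor product of modules and the well-definedness check, since one never needs to write a general section as a sum of decomposable ones; your route, in exchange, is purely algebraic once the identification is granted and generalizes verbatim to any commutative ring and projective modules. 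Either way the conclusion stands.
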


\begin{definition}[{\cite[definition 8.4.7]{MR2795151}, \cite[section 0]{MR1929305}, \cite[section 2]{MR1726784}}]
	Let $\mathcal{A}=(A\to M,\anchor,[\cdot,\cdot])$ be a Lie algebroid, and let $V\to M$ be a vector bundle. A \emph{$\mathcal{A}$-connection} on $V\to M$ is a $\setR$-linear map $\nabla:\Gamma(V)\to\Gamma(A^*\otimes V)$, satisfying the relations
	\begin{gather*}
		\nabla_u(fs)=f\nabla_u s+(\anchor(u)\cdot f)s,\\
		\nabla_{fu}s=f\nabla_u s,
	\end{gather*}
	for any function $f\in\smooth(M)$ and sections $u\in\Gamma(A)$, $s\in\Gamma(V)$, where $\nabla_u$ denotes the map $\Gamma(V)\to\Gamma(V)$, $s\mapsto(\nabla s)(u)$, for any $u\in\Gamma(A)$.
\end{definition}

Similarly to the covariant exterior derivative of a manifold equipped with a (linear) connection (see \cite[theorem 12.57]{MR2572292}), there is a similar operator for Lie algebroids equipped with a representation, whose existence is guaranteed by the following proposition.

\begin{proposition}[{\cite[section 5.2]{MR2455155}}]
	Let $\mathcal{A}=(A\to M,\anchor,[\cdot,\cdot])$ be a Lie algebroid, and let $\nabla$ be a $\mathcal{A}$-connection on a vector bundle $V\to M$. There exists a unique operator $\dif_\nabla:\Omega^p(\mathcal{A},V)\to\Omega^{p+1}(\mathcal{A},V)$ called the \emph{covariant exterior derivative} of $\mathcal{A}$, defined by $\dif_\nabla s=\nabla s$ for all $s\in\Gamma(V)$, and
	\begin{equation*}
		\dif_\nabla(\alpha\otimes s)=\dif\alpha\otimes s+(-1)^p\alpha\wedge\nabla s,
	\end{equation*}
	for any $\alpha\in\Omega^p(\mathcal{A})$ and $s\in\Gamma(V)$; it is then extended to the whole $\Omega^\bullet(\mathcal{A},V)$ by the rules
	\begin{gather}
		\dif_\nabla(\alpha\wedge\omega)=\dif\alpha\wedge\omega+(-1)^p\alpha\wedge\dif_\nabla\omega,
		\label{eq:cov-ext-der-left}\\
		\dif_\nabla(\omega\wedge\alpha)=\dif_\nabla\omega\wedge\alpha+(-1)^q\omega\wedge\dif\alpha,
		\label{eq:cov-ext-der-right}
	\end{gather}
	for all $\alpha\in\Omega^p(\mathcal{A})$ and $\omega\in\Omega^q(\mathcal{A},V)$.
\end{proposition}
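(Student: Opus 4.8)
The plan is to construct $\dif_\nabla$ first on \emph{decomposable} elements $\alpha\otimes s$ by the prescribed formula, then to extend it $\setR$-linearly to all of $\Omega^\bullet(\mathcal{A},V)$, and finally to verify the two Leibniz rules. Since $V\to M$ has constant rank, $\Gamma(V)$ is a finitely generated projective $\smooth(M)$-module, so that $\Gamma(\Lambda^\bullet A^*\otimes V)\cong\Omega^\bullet(\mathcal{A})\otimes_{\smooth(M)}\Gamma(V)$; in particular every $V$-valued form is a finite sum of decomposable elements $\alpha\otimes s$ with $\alpha\in\Omega^p(\mathcal{A})$ and $s\in\Gamma(V)$. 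Uniqueness is then automatic: an $\setR$-linear operator agreeing with the stated formula on every decomposable element is completely determined, and the degree-$0$ condition $\dif_\nabla s=\nabla s$ is subsumed by the formula applied with $\alpha\in\Omega^0(\mathcal{A})=\smooth(M)$.

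The only delicate point of existence is \textbf{well-definedness}. Because the tensor product is taken over $\smooth(M)$, the expression of a form as a sum of decomposables is not unique; the relations are generated by $(f\alpha)\otimes s=\alpha\otimes(fs)$ for $f\in\smooth(M)$. Thus I must check that the candidate formula sends both sides to the same element. Expanding $\dif_\nabla\big((f\alpha)\otimes s\big)$ using $\dif(f\alpha)=\dif f\wedge\alpha+f\,\dif\alpha$ (Proposition \ref{pr:exterior-derivative-derivation}), and $\dif_\nabla\big(\alpha\otimes(fs)\big)$ using $\nabla(fs)=\dif f\otimes s+f\nabla s$ (the Leibniz axiom of an $\mathcal{A}$-connection, together with $(\dif f)(u)=\anchor(u)\cdot f$), the two expressions agree once one rewrites $\alpha\wedge\dif f=(-1)^p\dif f\wedge\alpha$ via the graded commutativity of $\wedge$. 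This computation is the heart of the argument and the step I expect to be the \textbf{main obstacle}, since it is exactly where the three structures — the derivation property of $\dif$, the connection axiom, and graded commutativity — must fit together.

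Once $\dif_\nabla$ is well defined and $\setR$-linear, I would verify the left Leibniz rule \eqref{eq:cov-ext-der-left} on a decomposable form $\omega=\beta\otimes s$, using $\alpha\wedge\omega=(\alpha\wedge\beta)\otimes s$ from Proposition \ref{pr:graded-module-structure}. Applying the definition of $\dif_\nabla$ and expanding $\dif(\alpha\wedge\beta)=\dif\alpha\wedge\beta+(-1)^p\alpha\wedge\dif\beta$ reproduces exactly $\dif\alpha\wedge\omega+(-1)^p\alpha\wedge\dif_\nabla\omega$; bilinearity then gives \eqref{eq:cov-ext-der-left} in general. For the right Leibniz rule \eqref{eq:cov-ext-der-right} I would avoid a second computation and instead deduce it from \eqref{eq:cov-ext-der-left} through the graded commutativity relation $\omega\wedge\alpha=(-1)^{pq}\alpha\wedge\omega$: substituting this, applying \eqref{eq:cov-ext-der-left}, and collecting the signs $(-1)^{pq}$, $(-1)^{(p+1)q}$ and $(-1)^{p(q+1)}$ yields precisely $\dif_\nabla\omega\wedge\alpha+(-1)^q\omega\wedge\dif\alpha$, which closes the proof.
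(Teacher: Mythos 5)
Your proof is correct, but note that the paper itself offers no proof of this proposition: it is quoted from \cite[section 5.2]{MR2455155}, so there is no internal argument to compare yours against. Your construction --- identify $\Omega^\bullet(\mathcal{A},V)$ with $\Omega^\bullet(\mathcal{A})\otimes_{\smooth(M)}\Gamma(V)$, impose the formula on decomposables, extend $\setR$-linearly, and check that the formula respects the balanced relation $(f\alpha)\otimes s=\alpha\otimes(fs)$ --- is the standard algebraic route, and you have correctly isolated the crux: the balanced-relation check is exactly where the connection axiom $\nabla(fs)=f\nabla s+\dif f\otimes s$, the derivation property $\dif(f\alpha)=\dif f\wedge\alpha+f\,\dif\alpha$, and the sign rule $\alpha\wedge\dif f=(-1)^p\dif f\wedge\alpha$ must cancel against each other, and they do. Your deduction of \eqref{eq:cov-ext-der-right} from \eqref{eq:cov-ext-der-left} via $\omega\wedge\alpha=(-1)^{pq}\alpha\wedge\omega$ is also legitimate, because the right module structure is given on decomposables by $(\beta\otimes s)\wedge\alpha=(\beta\wedge\alpha)\otimes s$, which is graded-symmetric to the left multiplication of proposition \ref{pr:graded-module-structure}; the sign bookkeeping $(-1)^{pq}(-1)^{(p+1)q}=(-1)^q$ and $(-1)^{pq+p}(-1)^{p(q+1)}=1$ checks out. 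For comparison, the alternative route hinted at by the remark following the proposition (the intrinsic formula of \cite[definition 7.1.1]{MR1262213}) goes in the opposite direction: one defines $\dif_\nabla\omega$ pointwise by the Koszul-type formula, verifies $\smooth(M)$-multilinearity and antisymmetry so that it is a genuine element of $\Omega^{p+1}(\mathcal{A},V)$, and then derives the Leibniz rules; that avoids tensor-product bookkeeping at the cost of a longer multilinearity computation. Either argument is complete; yours has the advantage of making uniqueness manifest, since decomposables span $\Omega^\bullet(\mathcal{A},V)$ and the prescribed formula determines the operator on them.
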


\begin{remark}[{\cite[definition 7.1.1]{MR1262213}}]
	The operator $\dif_\nabla$ introduced in the previous proposition admits an intrinsic formula given by
	\begin{multline*}
		\dif_\nabla\omega(u_0,\dots,u_p)=\sum_{i=0}^p(-1)^i\nabla_{u_i}\omega(u_0,\dots,\widehat{u_i},\dots,u_p)\\
		+\sum_{0\leq i<j\leq p}(-1)^{i+j}\omega\big([u_i,u_j],u_0,\dots,\widehat{u_i},\dots,\widehat{u_j},\dots,u_p\big),
	\end{multline*}
	for all $\omega\in\Omega^p(\mathcal{A},V)$ and all $u_0,\dots,u_p\in\Gamma(A)$.
\end{remark}

The following result comes from the definition of a Lie algebroid connection.

\begin{lemma}\label{le:curvature-trilinear}
	Let $\mathcal{A}=(A\to M,\anchor,[\cdot,\cdot])$ be a Lie algebroid, and let $\nabla$ be a $\mathcal{A}$-connection on a vector bundle $V\to M$. The map $\Gamma(A)\times\Gamma(A)\times\Gamma(V)\to\Gamma(V)$, $(u,v,s)\mapsto(\nabla_u\circ\nabla_v-\nabla_v\circ\nabla_u-\nabla_{[u,v]})(s)$ is $\smooth(M)$-trilinear. Thus there exists a $2$-differential form on $\mathcal{A}$ with values in $\End V$, $F_\nabla\in\Omega^2(\mathcal{A},\End V)$, such that $F_\nabla(u,v)=\nabla_u\circ\nabla_v-\nabla_v\circ\nabla_u-\nabla_{[u,v]}$, for all $u$, $v\in\Gamma(A)$. 
\end{lemma}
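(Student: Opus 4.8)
The plan is to verify directly that the map
$R\colon(u,v,s)\mapsto(\nabla_u\circ\nabla_v-\nabla_v\circ\nabla_u-\nabla_{[u,v]})(s)$
is $\smooth(M)$-linear in each of its three arguments. Once this is done, the tensoriality principle (an $\smooth(M)$-multilinear map between spaces of sections is induced by a morphism of the corresponding bundles) guarantees that $R$ is induced by a unique bundle-theoretic object: for fixed $u,v$, the $\smooth(M)$-linearity in $s$ makes $R(u,v,\cdot)$ a section of $\End V$, and the $\smooth(M)$-linearity in $u$ and $v$ then exhibits the assignment $(u,v)\mapsto R(u,v,\cdot)$ as a section of $A^*\otimes A^*\otimes\End V$. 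Since $R$ is manifestly $\setR$-trilinear and skew-symmetric in $(u,v)$ (because $[v,u]=-[u,v]$), this section lives in $\Gamma(\Lambda^2 A^*\otimes\End V)=\Omega^2(\mathcal{A},\End V)$, and is the asserted $F_\nabla$. By the skew-symmetry, it suffices to establish $\smooth(M)$-linearity in $u$ and in $s$; linearity in $v$ follows.

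For the first slot I would replace $u$ by $fu$ with $f\in\smooth(M)$ and expand the three terms using the defining relation $\nabla_{fu}=f\nabla_u$ together with the left Leibniz rule $[fu,v]=f[u,v]-(\anchor(v)\cdot f)u$ recorded in the remark following definition \ref{def:Lie-algebroid}. The term $\nabla_v\nabla_{fu}s=\nabla_v(f\nabla_u s)$ produces, via the connection rule $\nabla_v(fw)=f\nabla_v w+(\anchor(v)\cdot f)w$, an extra summand $(\anchor(v)\cdot f)\nabla_u s$; this is cancelled exactly by the summand coming from $\nabla_{[fu,v]}s$, and what remains is $fR(u,v,s)$.

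The delicate step is linearity in the last slot. Replacing $s$ by $fs$ and applying the rule $\nabla_u(fs)=f\nabla_u s+(\anchor(u)\cdot f)s$ twice produces, besides the expected term $fR(u,v,s)$, both first-order and genuinely second-order contributions in $f$. The first-order terms, namely $(\anchor(u)\cdot f)\nabla_v s+(\anchor(v)\cdot f)\nabla_u s$ arising from $\nabla_u\nabla_v(fs)$, are matched and cancelled by their counterparts from $\nabla_v\nabla_u(fs)$. The second-order terms assemble into $\big(\anchor(u)\cdot(\anchor(v)\cdot f)-\anchor(v)\cdot(\anchor(u)\cdot f)-\anchor([u,v])\cdot f\big)s$, and this is precisely where the Lie algebroid structure is indispensable: by the proposition asserting that $\anchor$ is a morphism of Lie algebras, $\anchor([u,v])=[\anchor(u),\anchor(v)]$, so the very definition of the bracket of vector fields forces this coefficient to vanish. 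Hence $R(u,v,fs)=fR(u,v,s)$, and the verification is complete. I expect this cancellation of the second-order terms, hinging on the compatibility of the anchor with the brackets, to be the main point of the argument.
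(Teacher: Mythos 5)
Your proof is correct, and it is precisely the routine verification that the paper leaves implicit: the lemma is stated there without proof, as a direct consequence of the definitions. You correctly identify the one genuinely nontrivial point --- the second-order terms in the $s$-slot cancel only because $\anchor([u,v])=[\anchor(u),\anchor(v)]$, i.e.\ the proposition that the anchor is a morphism of Lie algebras --- and the remaining steps (tensoriality, and skew-symmetry in $(u,v)$ reducing linearity in $v$ to linearity in $u$) are all in order.
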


\begin{definition}
	Let $\mathcal{A}$ be a Lie algebroid on a manifold $M$, and let $\nabla$ be a $\mathcal{A}$-connection on a vector bundle $V\to M$. The $2$-differential form $F_\nabla$ introduced in the previous lemma will be called the \emph{curvature} of the $\mathcal{A}$-connection $\nabla$, and we will say that $\nabla$ is \emph{flat} if $F_\nabla=0$.
\end{definition}

The following lemma is immediate.

\begin{lemma}
	Let $\mathcal{A}$ be a Lie algebroid on a manifold $M$, and let $V\to M$ be a vector bundle. $\Omega^\bullet(\mathcal{A},\End V)$ is a $\setZ$-graded $\setR$-algebra for the multiplication defined by
	\begin{equation*}
		(\alpha\otimes\Phi)\wedge(\beta\otimes\Psi)=(\alpha\wedge\beta)\otimes(\Phi\circ\Psi),
	\end{equation*}
	for any $\alpha$, $\beta\in\Omega^\bullet(\mathcal{A})$ and $\Phi$, $\Psi\in\Gamma(\End V)$. Moreover, $\Omega^\bullet(\mathcal{A},V)$ is a left $\Omega^\bullet(\mathcal{A},\End V)$-module for the multiplication defined by
	\begin{equation*}
		(\alpha\otimes\Phi)\wedge(\beta\otimes s)=(\alpha\wedge\beta)\otimes\Phi(s),
	\end{equation*}
	for any $\alpha$, $\beta\in\Omega^\bullet(\mathcal{A})$, $\Phi\in\Gamma(\End V)$ and $s\in\Gamma(V)$.
\end{lemma}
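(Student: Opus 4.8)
The plan is to observe that both multiplications are obtained by combining two operations—the wedge product $\wedge$ on $\Omega^\bullet(\mathcal{A})$ and fiberwise composition (respectively fiberwise evaluation) on $\Gamma(\End V)$—and that each of these operations is $\smooth(M)$-bilinear, so that the whole statement reduces to standard properties of these two operations. The only point genuinely requiring care is well-definedness, since a general element of $\Omega^\bullet(\mathcal{A},\End V)=\Gamma(\Lambda^\bullet A^*\otimes\End V)$ is not a single decomposable tensor $\alpha\otimes\Phi$. First I would record the canonical isomorphism of $\smooth(M)$-modules
\[
	\Gamma(\Lambda^\bullet A^*\otimes\End V)\;\cong\;\Gamma(\Lambda^\bullet A^*)\otimes_{\smooth(M)}\Gamma(\End V),
\]
valid for sections of the tensor product of two vector bundles over the same base $M$. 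Composition $\circ$ of endomorphisms is $\smooth(M)$-bilinear because it is performed fiberwise, and $\wedge$ is $\smooth(M)$-bilinear by construction; hence the assignment prescribed on decomposable tensors descends to a well-defined $\smooth(M)$-bilinear map on the tensor product over $\smooth(M)$, which settles well-definedness.

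Once the product is known to be well-defined, the algebra axioms follow formally. The $\setZ$-grading is immediate because $\Gamma(\End V)$ sits in degree $0$, so $(\alpha\otimes\Phi)\wedge(\beta\otimes\Psi)$ has degree $\deg\alpha+\deg\beta$, matching the sum of the degrees of the factors. Distributivity over addition and $\setR$-bilinearity are inherited from the bilinearity of $\wedge$ and $\circ$. Associativity reduces, on decomposable tensors, to the associativity of $\wedge$ on $\Omega^\bullet(\mathcal{A})$—guaranteed by proposition \ref{pr:exterior-derivative-derivation}, which asserts that $\Omega^\bullet(\mathcal{A})$ is a graded algebra—together with the associativity of $\circ$ in each fiber $\End(V_x)$; the unit is $1\otimes\mathrm{id}_V$, with $1\in\smooth(M)=\Omega^0(\mathcal{A})$ and $\mathrm{id}_V\in\Gamma(\End V)$ the fiberwise identity. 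Note that no commutativity is claimed, consistently with the noncommutativity of $\End V$.

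For the module structure I would proceed identically, replacing the second factor $\Gamma(\End V)$ by $\Gamma(V)$ and fiberwise composition by the fiberwise evaluation $\End(V_x)\times V_x\to V_x$, which is again $\smooth(M)$-bilinear and hence yields a well-defined $\smooth(M)$-bilinear action. The module axioms then reduce to the compatibility $(\Phi\circ\Psi)(s)=\Phi(\Psi(s))$ in each fiber and to associativity of $\wedge$, while $(1\otimes\mathrm{id}_V)\wedge(\beta\otimes s)=\beta\otimes s$ shows that the unit acts as the identity. As in the algebra case, the only step deserving attention is the passage from decomposable tensors to general sections; everything else is a routine fiberwise verification, which is why the result is stated as immediate.
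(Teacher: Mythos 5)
Your proof is correct. The paper offers no proof at all for this lemma (it is introduced with ``The following lemma is immediate''), and your argument supplies exactly the routine verification the author leaves implicit: well-definedness of the products via $\smooth(M)$-multilinearity of $\wedge$, fiberwise composition, and fiberwise evaluation (using $\Gamma(\Lambda^\bullet A^*\otimes\End V)\cong\Gamma(\Lambda^\bullet A^*)\otimes_{\smooth(M)}\Gamma(\End V)$, valid under the paper's standing assumptions on vector bundles), followed by checking the algebra and module axioms on decomposable tensors.
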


\begin{proposition}
	Let $\mathcal{A}=(A\to M,\anchor,[\cdot,\cdot])$ be a Lie algebroid, and let $\nabla$ be a $\mathcal{A}$-connection on a vector bundle $V\to M$. We have the formula
	\begin{equation*}
		\dif_\nabla^2\alpha=F_\nabla\wedge\alpha,
	\end{equation*}
	for all $\alpha\in\Omega^\bullet(\mathcal{A},V)$. Therefore, if $\nabla$ is flat, $\dif_\nabla$ is a differential on $\Omega^\bullet(\mathcal{A},V)$.
\end{proposition}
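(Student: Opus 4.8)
The plan is to reduce the identity to the degree-zero case and then propagate it through the module structure. First I would establish that the operator $\dif_\nabla^2\colon\Omega^\bullet(\mathcal{A},V)\to\Omega^\bullet(\mathcal{A},V)$ is $\Omega^\bullet(\mathcal{A})$-linear, in the sense that $\dif_\nabla^2(\alpha\wedge\omega)=\alpha\wedge\dif_\nabla^2\omega$ for every $\alpha\in\Omega^p(\mathcal{A})$ and $\omega\in\Omega^\bullet(\mathcal{A},V)$. This follows by applying the Leibniz rule \eqref{eq:cov-ext-der-left} twice and using that $\dif^2=0$ (proposition \ref{pr:exterior-derivative-derivation}): the term $\dif^2\alpha\wedge\omega$ vanishes, while the two cross terms $(-1)^{p+1}\dif\alpha\wedge\dif_\nabla\omega$ and $(-1)^p\dif\alpha\wedge\dif_\nabla\omega$ cancel, leaving exactly $\alpha\wedge\dif_\nabla^2\omega$.

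Next I would observe that $F_\nabla\wedge\cdot$ enjoys the same linearity: since $F_\nabla\in\Omega^2(\mathcal{A},\End V)$ has even degree, graded commutativity of the wedge on the $\Lambda^\bullet A^*$ factors gives $\alpha\wedge(F_\nabla\wedge\omega)=F_\nabla\wedge(\alpha\wedge\omega)$, the relevant sign being $(-1)^{2p}=1$. Both sides of the desired identity are therefore $\Omega^\bullet(\mathcal{A})$-linear operators. Because $\dif_\nabla$ is a local (differential) operator and $F_\nabla\wedge\cdot$ is even $\smooth(M)$-linear, it suffices to verify the identity locally, where, after choosing a local frame of $V\to M$, every element of $\Omega^\bullet(\mathcal{A},V)$ is a finite $\Omega^\bullet(\mathcal{A})$-combination of sections $s\in\Gamma(V)=\Omega^0(\mathcal{A},V)$. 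By the linearity just established, it is thus enough to check the formula on $\Gamma(V)$.

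Finally, for $s\in\Gamma(V)$ I would compute $\dif_\nabla^2 s=\dif_\nabla(\nabla s)$ directly. Evaluating the $1$-form $\nabla s$ via the intrinsic formula for $\dif_\nabla$ on two sections $u,v\in\Gamma(A)$ yields
\[
	\dif_\nabla(\nabla s)(u,v)=\nabla_u\nabla_v s-\nabla_v\nabla_u s-\nabla_{[u,v]}s=F_\nabla(u,v)(s),
\]
where the last equality is the definition of $F_\nabla$ (lemma \ref{le:curvature-trilinear}). Since $(F_\nabla\wedge s)(u,v)=F_\nabla(u,v)(s)$ by the module multiplication, this reads precisely $\dif_\nabla^2 s=F_\nabla\wedge s$, closing the degree-zero case and hence, by the reduction above, the general statement. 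The assertion that $\dif_\nabla$ is a differential when $\nabla$ is flat is then immediate, since $F_\nabla=0$ forces $\dif_\nabla^2=0$.

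I expect the main obstacle to be twofold: the careful bookkeeping of signs in the first step, and the conceptual justification that treating degree zero is enough. The latter hinges on both operators being genuinely $\Omega^\bullet(\mathcal{A})$-linear — which in turn relies on $F_\nabla$ having even degree — together with locality, which is what legitimizes passing from the local frame expression to arbitrary global forms.
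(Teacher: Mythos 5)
Your proof is correct and is essentially the paper's own argument: the heart in both cases is the degree-zero identity $\dif_\nabla^2 s=F_\nabla\wedge s$, obtained from the formula for $\dif_\nabla$ on $1$-forms together with the definition of $F_\nabla$, followed by the double application of the Leibniz rule \eqref{eq:cov-ext-der-left} in which $\dif^2=0$ kills one term and the two cross terms cancel by sign, and finally the evenness of $F_\nabla$ to commute it past $\alpha$. The only difference is the final reduction step, and it is cosmetic: the paper extends from simple tensors $\alpha\otimes s$ by $\setR$-linearity, whereas you package the Leibniz computation as $\Omega^\bullet(\mathcal{A})$-linearity of $\dif_\nabla^2$ and invoke locality plus a local frame of $V\to M$, which is a slightly more careful justification of the same decomposition.
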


\begin{proof}
	To begin with, we have for any $\omega\in\Omega^1(\mathcal{A},V)$ that
	\begin{equation*}
		(\dif_\nabla\omega)(u,v)=\nabla_u\omega(v)-\nabla_v\omega(u)-\omega([u,v]),
	\end{equation*}
	for all $u$, $v\in\Gamma(A)$. Then, let $s\in\Gamma(V)$, the above formula for $\omega=\nabla s\in\Omega^1(\mathcal{A},V)$ yields
	\begin{equation*}
		\big(\dif_\nabla^2 s\big)(u,v)=\nabla_u\nabla_vs-\nabla_v\nabla_us-\nabla_{[u,v]}s=F_\nabla(u,v)s=(F_\nabla\wedge s)(u,v),
	\end{equation*}
	for all $u$, $v\in\Gamma(A)$.
	Now we prove the formula for any simple tensor  $\alpha\otimes s\in\Omega^p(\mathcal{A},V)$. According to \eqref{eq:cov-ext-der-left}, we have 
	\begin{align*}
		\dif_\nabla^2(\alpha\otimes s)&=\dif_\nabla^2(\alpha\wedge s)\\
		&=\dif_\nabla\big(\dif\alpha\wedge s+(-1)^{p}\alpha\wedge\dif_\nabla s\big)\\
		&=\dif^2\alpha\wedge s+(-1)^{p+1}\dif\alpha\wedge\dif_\nabla s+(-1)^{p}\dif\alpha\wedge\dif_\nabla s+\alpha\wedge\dif_\nabla^2 s\\
		&=\alpha\wedge(F_\nabla\wedge s)\\
		&=F_\nabla\wedge(\alpha\otimes s),
	\end{align*}
	where in the last step we used the fact that $F_\nabla$ is a $2$-differential form on $\mathcal{A}$. The general formula holds for any element of $\Omega^\bullet(\mathcal{A},V)$ using $\setR$-linear combinations of simple tensors.
\end{proof}

\begin{definition}[{\cite[section 8.4]{MR2795151}, \cite[section 1]{MR1726784}}]\label{def:representation}
	Let $\mathcal{A}$ be a Lie algebroid on a manifold $M$. A \emph{representation} of $\mathcal{A}$ (or a \emph{$\mathcal{A}$-module}) is a vector bundle $V\to M$ equipped with a flat $\mathcal{A}$-connection $\nabla$. Such a representation will be denoted by $(V\to M,\nabla)$.
\end{definition}

\begin{definition}
	Let $\mathcal{A}$ be a Lie algebroid on a manifold $M$ and $(V\to M,\nabla)$ be a $\mathcal{A}$-module. We define $H^\bullet(\mathcal{A};V,\nabla)$ as the homology of the differential graded vector space $(\Omega^\bullet(\mathcal{A},V),\dif_\nabla)$. We will call $H^\bullet(\mathcal{A};V,\nabla)$ the cohomology of $\mathcal{A}$ with \emph{coefficients} in $(V\to M,\nabla)$.
\end{definition}

\begin{lemma}\label{le:cohomology-graded-module-structure}
	Let $\mathcal{A}$ be a Lie algebroid on a manifold $M$ and $(V\to M,\nabla)$ be a $\mathcal{A}$-module. Then $H^\bullet(\mathcal{A};V,\nabla)$ is a graded $H^\bullet(\mathcal{A})$-module.
\end{lemma}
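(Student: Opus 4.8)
The plan is to transport the $\Omega^\bullet(\mathcal{A})$-module structure of Proposition \ref{pr:graded-module-structure} to cohomology, exactly as the cup product was obtained earlier from the wedge product. Concretely, I would define
\[
[\alpha]\cdot[\omega]=[\alpha\wedge\omega],
\]
for a $\dif$-closed form $\alpha\in\Omega^p(\mathcal{A})$ representing a class in $H^p(\mathcal{A})$ and a $\dif_\nabla$-closed form $\omega\in\Omega^q(\mathcal{A},V)$ representing a class in $H^q(\mathcal{A};V,\nabla)$, the wedge being the module multiplication of Proposition \ref{pr:graded-module-structure}. Since we work with a representation, $\nabla$ is flat by Definition \ref{def:representation}, hence $\dif_\nabla^2=0$ and $H^\bullet(\mathcal{A};V,\nabla)$ is well defined.

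The heart of the argument is to show this operation descends to cohomology. First I would check that $\alpha\wedge\omega$ is a $\dif_\nabla$-cocycle: by the Leibniz rule \eqref{eq:cov-ext-der-left},
\[
\dif_\nabla(\alpha\wedge\omega)=\dif\alpha\wedge\omega+(-1)^p\alpha\wedge\dif_\nabla\omega=0,
\]
both terms vanishing since $\alpha$ and $\omega$ are closed. Next, and this is the main obstacle, I would verify independence of the chosen representatives. Replacing $\alpha$ by $\alpha+\dif\beta$ with $\beta\in\Omega^{p-1}(\mathcal{A})$ and $\omega$ by $\omega+\dif_\nabla\eta$ with $\eta\in\Omega^{q-1}(\mathcal{A},V)$, expanding the product and applying \eqref{eq:cov-ext-der-left} repeatedly while using $\dif\alpha=0$, $\dif_\nabla\omega=0$ and crucially flatness $\dif_\nabla^2=0$, I would exhibit the difference as $\dif_\nabla$-exact:
\[
(\alpha+\dif\beta)\wedge(\omega+\dif_\nabla\eta)-\alpha\wedge\omega=\dif_\nabla\big(\beta\wedge\omega+(-1)^p\alpha\wedge\eta+\beta\wedge\dif_\nabla\eta\big).
\]
The cross term $\dif\beta\wedge\dif_\nabla\eta$ is precisely the one recovered from $\dif_\nabla(\beta\wedge\dif_\nabla\eta)$, where flatness is indispensable; this is the computation I expect to require the most care, and it parallels the identity used for the cup product.

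Finally I would verify the graded-module axioms. By construction the action raises degree additively, sending $H^p(\mathcal{A})\otimes H^q(\mathcal{A};V,\nabla)$ into $H^{p+q}(\mathcal{A};V,\nabla)$. Bilinearity over $\setR$ and the unit law $[1]\cdot[\omega]=[\omega]$ are immediate from the corresponding properties at the level of forms. Compatibility with the cup product, $([\alpha]\wedge[\alpha'])\cdot[\omega]=[\alpha]\cdot([\alpha']\cdot[\omega])$, follows from associativity of the wedge multiplication in Proposition \ref{pr:graded-module-structure}, so no new verification is needed beyond transporting those identities through the well-definedness just established.
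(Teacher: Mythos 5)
Your proposal is correct and follows essentially the same route as the paper: the multiplication $[\alpha]\wedge[\omega]=[\alpha\wedge\omega]$, closedness of $\alpha\wedge\omega$ via the Leibniz rule \eqref{eq:cov-ext-der-left}, and well-definedness via the identical explicit primitive $\beta\wedge\omega+(-1)^p\alpha\wedge\eta+\beta\wedge\dif_\nabla\eta$, with flatness absorbing the cross term $\dif\beta\wedge\dif_\nabla\eta$. The additional verification of the graded-module axioms at the end is routine and consistent with what the paper leaves implicit.
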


\begin{proof}
	The multiplication is directly induced by the one defined in the proposition \ref{pr:graded-module-structure}: $[\alpha]\wedge[\omega]=[\alpha\wedge\omega]$, for any $\alpha\in\Omega^p(\mathcal{A})$ and $\omega\in\Omega^q(\mathcal{A},V)$. According to \eqref{eq:cov-ext-der-left}, it is clear that $\alpha\wedge\omega$ is again $\dif_\nabla$-closed. Also, the multiplication is well-defined, since, using \eqref{eq:cov-ext-der-left} again, we have
	\begin{equation*}
		(\alpha+\dif\beta)\wedge(\omega+\dif_\nabla\eta)=\alpha\wedge\omega+\dif_\nabla\big(\beta\wedge\omega+(-1)^p\alpha\wedge\eta+\beta\wedge\dif_\nabla\eta\big),
	\end{equation*}
	for any $\beta\in\Omega^\bullet(\mathcal{A})$ and $\eta\in\Omega^\bullet(\mathcal{A},V)$.
\end{proof}

\begin{example}\label{ex:trivial-representation}
	Let $\mathcal{A}=(A\to M,\anchor,[\cdot,\cdot])$ be a Lie algebroid. The \emph{trivial representation} of $\mathcal{A}$ is given by the trivial vector bundle $V=M\times\setR\to M$ of rank 1, together with the connection defined by $\nabla_u\lambda=\anchor(u)\cdot\lambda$, for any $u\in\Gamma(A)$ and $\lambda\in\Gamma(V)\cong\smooth(M)$. In this case, $H^\bullet(\mathcal{A};V,\nabla)\cong H^\bullet(\mathcal{A})$.
\end{example}

\begin{example}
	Let $\mathcal{A}$ be a Lie algebroid over a point $M$, that is, a Lie algebra $\mathfrak{g}$ (see example \ref{ex:Lie-algebra}), that we will assume to be of finite dimension. Let $(V\to M,\nabla)$ be a $\mathcal{A}$-module. Since $M$ is a point, $V\to M$ is actually a vector space that we will denote again by $V$. Then, the $\mathcal{A}$-connection $\nabla$ is a linear map $V\to\mathfrak{g}^*\otimes V$. Using the linear isomorphisms
	\begin{equation*}
		\Hom(V,\mathfrak{g}^*\otimes V)\cong\Hom(V,\Hom(\mathfrak{g},V))\cong\Hom(\mathfrak{g}\otimes V,V)\cong\Hom(\mathfrak{g},\End(V)),
	\end{equation*}
	we deduce that, to consider a linear map $V\to\mathfrak{g}^*\otimes V$ is equivalent to consider a linear map $\rho:\mathfrak{g}\to V$. Moreover, this map $\rho$ is also a Lie algebra homomorphism thanks to proposition \ref{le:curvature-trilinear}. Therefore, in this case, a $\mathcal{A}$-module $(V\to M,\nabla)$ corresponds to a $\mathfrak{g}$-module $(V,\rho)$. The covariant exterior derivative is the Chevalley-Eilenberg differential on $\mathfrak{g}$ associated to the $\mathfrak{g}$-module $V$ (see \cite[section 23]{Chevalley1948}) and $H^\bullet(\mathcal{A};V,\nabla)$ is the Chevalley-Eilenberg cohomology $H_\textsf{CE}^\bullet(\mathfrak{g},V)$ of $\mathfrak{g}$ with coefficients in the $\mathfrak{g}$-module $V$.
\end{example}

\begin{example}
	Let $M$ be a manifold and let $\mathcal{T}_M$ denote the canonical Lie algebroid associated to $M$ (see example \ref{ex:canonical}). A representation of $\mathcal{T}_M$ is a vector bundle $V\to M$ together with a flat linear connection $\nabla$ on $V\to M$. The covariant exterior derivative is the differential $\difDeRham_\nabla$ associated to $\nabla$ (see \cite[theorem 12.57]{MR2572292} and \cite[chapitre 7, section 4]{MR0336651}), and $H^\bullet(\mathcal{T}_M;V,\nabla)$ is the homology of the chain complex $(\Gamma(\Lambda^\bullet T^*M\otimes V),\difDeRham_\nabla)$.
\end{example}

\begin{example}
	Let $(M,\pi)$ be a Poisson manifold and let $\mathcal{P}_M[\pi]$ be the associated Lie algebroid (see example \ref{ex:Poisson}). A $\mathcal{P}_M[\pi]$-connection on a vector bundle $V\to M$ corresponds to the notion of a \emph{contravariant connection} on $(M,\pi)$ (see \cite[proposition 2.1.2]{MR1818181}). However, it seems that there is no mention of the associated cohomology in the literature.
\end{example}

\begin{example}
	Let $\mathcal{A}$ be the action Lie algebroid of example \ref{ex:action}. We have a representation of $\mathcal{A}$ given by the trivial vector bundle $M\times\setR\to\setR$ of rank 1, together with the connection $\nabla:\smooth(M)\to\smooth(M,\mathfrak{g}^*)\otimes\smooth(M)$ defined by $\nabla_\xi\lambda=\rho(\xi)\cdot\lambda$, for any $\xi\in\mathfrak{g}$ and $\lambda\in\smooth(M)$. Note that this connection is flat because $\rho:\mathfrak{g}\to\Gamma({TM})$ is a Lie algebra homomorphism. We have $\Omega^\bullet(\mathcal{A},M\times\setR)\cong\smooth(M,\Lambda^\bullet\mathfrak{g}^*)$, and we also have an isomorphism of chain complexes
	\begin{equation*}
		\begin{tikzcd}[row sep=large]
			\Lambda^k\mathfrak{g}^*\otimes\smooth(M)\arrow{d}[swap]{\Phi^k}\arrow{r}{\dif_\mathsf{CE}} &
			\Lambda^{k+1}\mathfrak{g}^*\otimes\smooth(M)\arrow{d}{\Phi^{k+1}}\\
			\smooth(M,\Lambda^k\mathfrak{g}^*)\arrow{r}{\dif} & \smooth(M,\Lambda^{k+1}\mathfrak{g}^*)
		\end{tikzcd},
	\end{equation*}
	where $\dif_\textsf{CE}$ denotes the Chevalley-Eilenberg differential \cite[section 23]{Chevalley1948}, and where $\Phi^k$ denotes the natural map $\Lambda^k\mathfrak{g}^*\otimes\smooth(M)\to\smooth(M,\Lambda^k\mathfrak{g}^*)$, $\omega\otimes f\mapsto f\omega$. This map induces an isomorphism of graded modules between the cohomology $H^\bullet(\mathcal{A};M\times\setR,\nabla)$ on one side, and the Chevalley-Eilenberg cohomology \linebreak $H_\textsf{CE}^\bullet(\mathfrak{g},\smooth(M))$ of $\mathfrak{g}$ with coefficients in the $\mathfrak{g}$-module $(\smooth(M),\rho)$ on the other side.
\end{example}

\section{Twisted cohomology of Lie algebroids}
\label{sec:twisted}

In this section, given a Lie algebroid $\mathcal{A}$ and an odd cocycle $\theta$ with respect to the cohomology of $\mathcal{A}$, we define a new cohomology, that we will call the \emph{twisted cohomology} of $\mathcal{A}$ by $\theta$. We show that it only depends on the cohomology class $[\theta]$ of $\theta$ (in the Lie algebroid cohomology of $\mathcal{A}$). Then we explain how this new cohomology encompasses several cohomology theories that appear in the literature.

\subsection{Definition of the twisted cohomology}

\begin{definition}
	Let $\mathcal{A}$ be a Lie algebroid on a manifold $M$. Define
	\begin{equation*}
		\Omega^{\text{even}}(\mathcal{A})=\bigoplus_{i\geq 0}\Omega^{2i}(\mathcal{A}),\quad\Omega^{\text{odd}}(\mathcal{A})=\bigoplus_{i\geq 0}\Omega^{2i+1}(\mathcal{A}).
	\end{equation*}
	The elements of $\Omega^\text{even}(\mathcal{A})$ are called \emph{even differential forms} on $\mathcal{A}$ and elements of $\Omega^\text{odd}(\mathcal{A})$ are called \emph{odd differential forms} on $\mathcal{A}$.
\end{definition}

This decomposition $\Omega^\bullet(\mathcal{A})=\Omega^{\text{even}}(\mathcal{A})\oplus\Omega^{\text{odd}}(\mathcal{A})$ of differential forms on $\mathcal{A}$ equips $\Omega^\bullet(\mathcal{A})$ with a $\setZ/2\setZ$-graduation, induced by the $\setZ$-graduation. We extend this $\setZ/2\setZ$-graduation to $\Omega^\bullet(\mathcal{A},V)$ as soon as we are provided with a representation $(V\to M,\nabla)$ of $\mathcal{A}$, $M$ being the base of $\mathcal{A}$. 

\begin{definition}
	Let $\mathcal{A}$ be a Lie algebroid on a manifold $M$, $(V\to M,\nabla)$ be a representation of $\mathcal{A}$, and $\theta\in\Omega^\text{odd}(\mathcal{A})$. The \emph{exterior covariant derivative} of $\mathcal{A}$, relatively to $(V\to M,\nabla)$, \emph{twisted} by the odd differential form $\theta$, is the operator $\dif_\nabla[\theta]$ defined by \begin{equation*}{\dif_\nabla[\theta]}\,\omega=\dif_\nabla\omega+\theta\wedge\omega,\end{equation*} for any $\omega\in\Omega^\bullet(\mathcal{A},V)$.
\end{definition}

This operator appeared in \cite[section 3]{GrabowskiMarmo} and \cite[section 3.1]{MR1862087}. Note that, since $\theta$ is odd, the operator ${\dif_\nabla[\theta]}$ is odd, namely ${\dif_\nabla[\theta]}$ maps $\Omega^\text{even}(\mathcal{A},V)$ into $\Omega^\text{odd}(\mathcal{A},V)$ and $\Omega^\text{odd}(\mathcal{A},V)$ into $\Omega^\text{even}(\mathcal{A},V)$.

\begin{lemma}
	Let $\mathcal{A}$ be a Lie algebroid on a manifold $M$, $(V\to M,\nabla)$ be a representation of $\mathcal{A}$, and $\theta\in\Omega^\text{odd}(\mathcal{A})$ be a $\dif$-closed odd differential form on $\mathcal{A}$, where $\dif$ denotes the differential of $\mathcal{A}$ (see definition \ref{def:exterior-derivative}). Under these assumptions, the operator ${\dif_\nabla[\theta]}$ is a differential on the $\setZ/2\setZ$-graded $\setR$-vector space $\Omega^\bullet(\mathcal{A},V)$.
\end{lemma}

\begin{proof}
	For any $\omega\in\Omega^\bullet(\mathcal{A},V)$ we have
	\begin{align*}
		{\dif_\nabla[\theta]}^2\;\omega=\dif_\nabla^2\omega+\dif_\nabla(\theta\wedge\omega)+\theta\wedge\dif_\nabla\omega+\theta\wedge\theta\wedge\omega.
	\end{align*}
	But $\dif_\nabla^2\omega=0$ since $\nabla$ is a flat connection (see the definition \ref{def:representation}). Also $\theta\wedge\theta=0$ since $\theta$ is odd. Indeed, write $\sum_{i\geq 0}\theta_{2i+1}$ for $\theta$, with $\theta_{2i+1}\in\Omega^{2i+1}(\mathcal{A})$. Then $\theta_{2i+1}\wedge\theta_{2i+1}=0$ for any $i\geq 0$ and $\theta_{2i+1}\wedge\theta_{2j+1}=-\theta_{2j+1}\wedge\theta_{2i+1}$ for all $i$, $j\geq0$ such that $i\neq j$. This implies that $\theta\wedge\theta=0$. Using the same decomposition for $\theta$ again, the above formula yields 
	\begin{equation*}
		{\dif_\nabla[\theta]}^2\omega=\sum_{i\geq 0}\left(\dif_\nabla\theta_{2i+1}\right)\wedge\omega,
	\end{equation*}
	which equals zero since $\dif_\nabla\theta=\dif\theta=0$, $\theta$ being assumed to be $\dif$-closed.
\end{proof}

\begin{definition}
	Let $\mathcal{A}$ be a Lie algebroid on a manifold $M$, $(V\to M,\nabla)$ be a representation of $\mathcal{A}$, and $\theta\in\Omega^\text{odd}(\mathcal{A})$ a $\dif$-closed odd differential form on $\mathcal{A}$. Let $H^\bullet(\mathcal{A};V,\nabla;\theta)$ be the homology of the differential graded vector space $(\Omega^\bullet(\mathcal{A},V),{\dif_\nabla[\theta]})$. We will call $H^\bullet(\mathcal{A};V,\nabla;\theta)$ the cohomology of $\mathcal{A}$ relatively to the representation $(V\to M,\nabla)$, \emph{twisted} by the cocycle $\theta$.
\end{definition}

\begin{lemma}
	Let $\mathcal{A}$ be a Lie algebroid on a manifold $M$, and $(V\to M,\nabla)$ be a representation of $\mathcal{A}$, and $\theta\in\Omega^\text{odd}(\mathcal{A})$ a $\dif$-closed odd differential form on $\mathcal{A}$. Then $H^\bullet(\mathcal{A};V,\nabla;\theta)$ is a graded $H^\bullet(\mathcal{A})$-module.
\end{lemma}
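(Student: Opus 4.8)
The plan is to mimic the proof of Lemma~\ref{le:cohomology-graded-module-structure}, replacing the differential $\dif_\nabla$ by the twisted differential ${\dif_\nabla[\theta]}$ throughout. The module structure will be induced by the wedge product of Proposition~\ref{pr:graded-module-structure}, via $[\alpha]\wedge[\omega]=[\alpha\wedge\omega]$ for a $\dif$-closed $\alpha\in\Omega^p(\mathcal{A})$ and a ${\dif_\nabla[\theta]}$-closed $\omega\in\Omega^\bullet(\mathcal{A},V)$. Since ${\dif_\nabla[\theta]}$ is odd, one must keep in mind that $H^\bullet(\mathcal{A};V,\nabla;\theta)$ carries only the $\setZ/2\setZ$-grading, so that the product respects parity, sending $H^p(\mathcal{A})\times H^{\bar q}(\mathcal{A};V,\nabla;\theta)$ into $H^{\overline{p+q}}(\mathcal{A};V,\nabla;\theta)$.

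The key step is a twisted Leibniz rule:
\begin{equation*}
	{\dif_\nabla[\theta]}(\alpha\wedge\omega)=\dif\alpha\wedge\omega+(-1)^p\alpha\wedge{\dif_\nabla[\theta]}\,\omega,
\end{equation*}
valid for every $\alpha\in\Omega^p(\mathcal{A})$ and every $\omega\in\Omega^\bullet(\mathcal{A},V)$. To obtain it I would expand the left-hand side as $\dif_\nabla(\alpha\wedge\omega)+\theta\wedge\alpha\wedge\omega$, apply the ordinary rule \eqref{eq:cov-ext-der-left} to the first term, and rewrite $\theta\wedge\alpha$ using graded commutativity. This is the heart of the matter, and the only step I expect to require genuine care: because $\theta$ is odd, decomposing $\theta=\sum_{i\geq 0}\theta_{2i+1}$ gives $\theta_{2i+1}\wedge\alpha=(-1)^{(2i+1)p}\alpha\wedge\theta_{2i+1}=(-1)^p\alpha\wedge\theta_{2i+1}$, hence $\theta\wedge\alpha=(-1)^p\alpha\wedge\theta$; the resulting sign $(-1)^p$ is exactly the one needed to absorb the term $(-1)^p\alpha\wedge(\theta\wedge\omega)$ into $(-1)^p\alpha\wedge{\dif_\nabla[\theta]}\,\omega$.

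With this rule in hand, closedness of the product is immediate: if $\dif\alpha=0$ and ${\dif_\nabla[\theta]}\,\omega=0$, then ${\dif_\nabla[\theta]}(\alpha\wedge\omega)=0$, so $\alpha\wedge\omega$ is again a cocycle. For independence of the chosen representatives I would establish, exactly as in Lemma~\ref{le:cohomology-graded-module-structure}, the identity
\begin{equation*}
	(\alpha+\dif\beta)\wedge(\omega+{\dif_\nabla[\theta]}\,\eta)=\alpha\wedge\omega+{\dif_\nabla[\theta]}\big(\beta\wedge\omega+(-1)^p\alpha\wedge\eta+\beta\wedge{\dif_\nabla[\theta]}\,\eta\big),
\end{equation*}
for arbitrary $\beta\in\Omega^\bullet(\mathcal{A})$ and $\eta\in\Omega^\bullet(\mathcal{A},V)$, which follows from repeated use of the twisted Leibniz rule together with $\dif\alpha=0$, ${\dif_\nabla[\theta]}\,\omega=0$, and ${\dif_\nabla[\theta]}^2=0$. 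Finally, the module axioms — associativity, compatibility with the cup product on $H^\bullet(\mathcal{A})$, and unitality — descend formally from the associativity of $\wedge$ on $\Omega^\bullet(\mathcal{A},V)$ and require no further computation.
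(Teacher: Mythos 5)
Your proposal is correct and takes essentially the same route as the paper: the paper's proof performs exactly the computation you describe, deriving ${\dif_\nabla[\theta]}(\alpha\wedge\omega)=\dif\alpha\wedge\omega+(-1)^p\alpha\wedge{\dif_\nabla[\theta]}\,\omega$ from \eqref{eq:cov-ext-der-left} together with the sign $\theta\wedge\alpha=(-1)^p\alpha\wedge\theta$ coming from the oddness of $\theta$, and then verifying the identical coboundary identity $(\alpha+\dif\beta)\wedge\big(\omega+{\dif_\nabla[\theta]}\,\eta\big)=\alpha\wedge\omega+{\dif_\nabla[\theta]}\big(\beta\wedge\omega+(-1)^p\alpha\wedge\eta+\beta\wedge{\dif_\nabla[\theta]}\,\eta\big)$ with the same correction term. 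The only cosmetic difference is that the paper proves this last identity by a direct brute-force expansion, whereas you obtain it more economically by reusing the twisted Leibniz rule together with ${\dif_\nabla[\theta]}^2=0$.
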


\begin{proof}
	The multiplication is the same as in lemma \ref{le:cohomology-graded-module-structure}, only the differential changes. Let $\alpha\in\Omega^p(\mathcal{A})$ be a $\dif$-closed differential form on $\mathcal{A}$ and let $\omega\in\Omega^q(\mathcal{A},V)$ be a ${\dif_\nabla[\theta]}$-closed differential form on $\mathcal{A}$ with values in $V\to M$. Thanks to the identity \eqref{eq:cov-ext-der-left}, we compute that
	\begin{align*}
		{\dif_\nabla[\theta]}(\alpha\wedge\omega)&=\dif_\nabla(\alpha\wedge\omega)+\theta\wedge\alpha\wedge\omega\\
		&=\dif\alpha\wedge\omega+(-1)^p\alpha\wedge\dif_\nabla\omega+\theta\wedge\alpha\wedge\omega\\
		&=\dif\alpha\wedge\omega+(-1)^p\alpha\wedge{\dif_\nabla[\theta]}\omega\\
		&=0,
	\end{align*}
	so the multiplication makes sense, that is we obtain a cocycle. But we also need to show that the multiplication is well-defined, that is it doesn't depend on the choice of representative. Thanks to the identity \eqref{eq:cov-ext-der-left} again, we compute for all $\beta\in\Omega^{p-1}(\mathcal{A})$ and $\eta\in\Omega^{q-1}(\mathcal{A},V)$ that
	\begin{align*}
		{\dif_\nabla[\theta]}\big(\beta\wedge\omega&+(-1)^p\alpha\wedge\eta+\beta\wedge{\dif_\nabla[\theta]}\eta\big)\\
		&=\dif\beta\wedge\omega+(-1)^{p-1}\beta\wedge\dif_\nabla\omega+\theta\wedge\beta\wedge\omega\\
		&\quad\quad+\alpha\wedge\dif_\nabla\eta+(-1)^p\theta\wedge\alpha\wedge\eta\\
		&\quad\quad+\dif\beta\wedge\dif_\nabla[\theta]\eta+(-1)^{p-1}\beta\wedge\dif_\nabla\big(\dif_\nabla[\theta]\eta\big)+\theta\wedge\beta\wedge\dif_\nabla[\theta]\eta\\
		&=\dif\beta\wedge\omega+(-1)^{p-1}\beta\wedge\dif_\nabla[\theta]\omega+\alpha\wedge\dif_\nabla[\theta]\eta\\
		&\quad\quad+\dif\beta\wedge\dif_\nabla[\theta]\eta-(-1)^{p-1}\beta\wedge\theta\wedge\dif_\nabla\eta\\
		&\quad\quad+\theta\wedge\beta\wedge\dif_\nabla\eta+\theta\wedge\beta\wedge\theta\wedge\eta\\
		&=\dif\beta\wedge\omega+\alpha\wedge\dif_\nabla[\theta]\eta+\dif\beta\wedge\dif_\nabla[\theta]\eta.
	\end{align*}
	Therefore we obtain
	\begin{equation*}
		(\alpha+\dif\beta)\wedge\big(\omega+{\dif_\nabla[\theta]}\eta\big)=\alpha\wedge\omega+{\dif_\nabla[\theta]}\big(\beta\wedge\omega+(-1)^p\alpha\wedge\eta+\beta\wedge{\dif_\nabla[\theta]}\eta\big),
	\end{equation*}
	which yields the result.
\end{proof}

\subsection{Independence from the cocycle cohomology class}

We can now state the main theorem of this note.

\begin{theorem}
	Let $\mathcal{A}$ be a Lie algebroid on a manifold $M$, $(V\to M,\nabla)$ be a representation of $\mathcal{A}$, $\theta\in\Omega^\text{odd}(\mathcal{A})$ a $\dif$-closed odd differential form on $\mathcal{A}$ and $\Psi\in\Omega^\text{even}(\mathcal{A})$. We have an isomorphism of graded $H^\bullet(\mathcal{A})$-modules
	\begin{equation*}
		H^\bullet(\mathcal{A};V,\nabla;\theta+\dif\Psi)\cong H^\bullet(\mathcal{A};V,\nabla;\theta).
	\end{equation*}
\end{theorem}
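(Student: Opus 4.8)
The plan is to exhibit an explicit isomorphism of cochain complexes between $(\Omega^\bullet(\mathcal{A},V),\dif_\nabla[\theta+\dif\Psi])$ and $(\Omega^\bullet(\mathcal{A},V),\dif_\nabla[\theta])$ and then to check that it is compatible with the $H^\bullet(\mathcal{A})$-action. Since $\Psi$ is an even differential form and $\Omega^\bullet(\mathcal{A})=\Gamma(\Lambda^\bullet A^*)$ is fiberwise finite-dimensional, the exponential
\[
	e^{-\Psi}=\sum_{k\geq 0}\frac{(-1)^k}{k!}\,\Psi^{\wedge k}
\]
is a well-defined \emph{even} differential form on $\mathcal{A}$, the sum being finite because $\Psi^{\wedge k}=0$ once $2k$ exceeds the rank of $A\to M$. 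I would define $\Phi\colon\Omega^\bullet(\mathcal{A},V)\to\Omega^\bullet(\mathcal{A},V)$ by $\Phi(\omega)=e^{-\Psi}\wedge\omega$. Because $e^{-\Psi}$ is even, $\Phi$ preserves the $\setZ/2\setZ$-grading; and because even forms are central in the graded-commutative algebra $\Omega^\bullet(\mathcal{A})$, the form $e^{\Psi}$ provides a two-sided inverse via $e^{-\Psi}\wedge e^{\Psi}=1$ (valid since $\Psi$ commutes with itself under $\wedge$). Thus $\Phi$ is a bijection on forms, and it remains to verify that it is a chain map.

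The whole computation rests on a single identity. Since $\dif$ is a derivation of degree $1$ (proposition \ref{pr:exterior-derivative-derivation}) and $\Psi$ is even, the form $\dif\Psi$ commutes with $\Psi$ under $\wedge$, so $\dif(\Psi^{\wedge k})=k\,\dif\Psi\wedge\Psi^{\wedge(k-1)}$ and therefore
\[
	\dif\big(e^{-\Psi}\big)=-\,\dif\Psi\wedge e^{-\Psi}.
\]
Using the Leibniz rule \eqref{eq:cov-ext-der-left} (with a $+$ sign, $e^{-\Psi}$ being even) together with the fact that the odd form $\theta$ commutes with the even form $e^{-\Psi}$, I would compute
\[
	\dif_\nabla[\theta+\dif\Psi]\big(e^{-\Psi}\wedge\omega\big)=\dif\big(e^{-\Psi}\big)\wedge\omega+e^{-\Psi}\wedge\dif_\nabla\omega+(\theta+\dif\Psi)\wedge e^{-\Psi}\wedge\omega.
\]
The term $\dif(e^{-\Psi})\wedge\omega=-\dif\Psi\wedge e^{-\Psi}\wedge\omega$ cancels the contribution $\dif\Psi\wedge e^{-\Psi}\wedge\omega$, leaving $e^{-\Psi}\wedge(\dif_\nabla\omega+\theta\wedge\omega)=\Phi(\dif_\nabla[\theta]\,\omega)$. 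This is precisely the intertwining relation $\dif_\nabla[\theta+\dif\Psi]\circ\Phi=\Phi\circ\dif_\nabla[\theta]$, so $\Phi$ is an isomorphism of cochain complexes and hence induces an isomorphism on cohomology.

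To upgrade this to an isomorphism of graded $H^\bullet(\mathcal{A})$-modules, I would use centrality once more: for a $\dif$-closed $\alpha\in\Omega^\bullet(\mathcal{A})$ one has $\Phi(\alpha\wedge\omega)=e^{-\Psi}\wedge\alpha\wedge\omega=\alpha\wedge e^{-\Psi}\wedge\omega=\alpha\wedge\Phi(\omega)$, so $\Phi$ commutes with the module multiplication of the preceding lemma; passing to cohomology classes yields the asserted module isomorphism, which respects the $\setZ/2\setZ$-grading since $\Phi$ does.

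The only genuinely delicate points I expect are bookkeeping ones. First, the sign of the exponent must be chosen correctly: it has to be $e^{-\Psi}$ rather than $e^{+\Psi}$, otherwise $\Phi$ intertwines the differentials in the opposite direction. Second, the derivation identity $\dif(e^{-\Psi})=-\dif\Psi\wedge e^{-\Psi}$ must be justified carefully, and this is exactly where the evenness of $\Psi$ is essential, for it is what lets $\dif\Psi$ pass through the powers of $\Psi$ with no extra signs; the hypothesis $\Psi\in\Omega^\text{even}(\mathcal{A})$ cannot be dropped. Everything else is formal.
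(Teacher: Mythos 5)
Your proposal is correct and takes essentially the same route as the paper: both rest on multiplication by the exponential of the even form $\Psi$ (well defined fiberwise since the series terminates in positive degrees), the derivation identity $\dif\big(e^{\pm\Psi}\big)=\pm\,\dif\Psi\wedge e^{\pm\Psi}$, and the resulting intertwining of the two twisted differentials; the only differences are that you use $e^{-\Psi}$ as a chain map from the $\theta$-complex to the $(\theta+\dif\Psi)$-complex where the paper uses $e^{\Psi}$ in the opposite direction, and that you obtain the $H^\bullet(\mathcal{A})$-module compatibility from the chain-level identity $e^{-\Psi}\wedge\alpha\wedge\omega=\alpha\wedge e^{-\Psi}\wedge\omega$ (centrality of even forms), which is a cleaner version of the paper's explicit computation with representatives $\alpha+\dif\beta$ and $\omega+\dif_\nabla[\theta]\eta$.
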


\begin{proof}
	The idea of the proof is the following. Consider $\omega\in\Ker\dif_{\nabla}[\theta+\dif\Psi]$, so $\omega$ satisfies the \emph{differential equation} ${\dif_\nabla[\theta]}\omega+\dif\Psi\wedge\omega=0$. By analogy with calculus, all solutions can be written as $\omega=\exp(-\Psi)\wedge\eta$, where $\eta$ is a \emph{constant} for ${\dif_\nabla[\theta]}$, that is $\eta\in\Ker{\dif_\nabla[\theta]}$, provided that we can define $\exp(-\Psi)$. Thus, the isomorphism should be $\omega\mapsto\exp(\Psi)\wedge\omega$, for any $\omega\in\Omega^\bullet(\mathcal{A},V)$. Now we formulate a rigorous proof. Define $\exp(\Psi)|_x=\sum_{k\geq 0}\frac{\Psi^{\wedge k}|_x}{k!}$, for any $x\in M$; this series converge because exterior powers are known to vanish when the rank of the vector bundle is exceeded. For all real numbers $s$ and $t$ we have that $\exp(s\Psi)\wedge\exp(t\Psi)=\exp((s+t)\Psi)$ thanks to Newton's binomial theorem; in particular for $s=1$ and $t=-1$ we get that the map $\omega\mapsto\exp(-\Psi)\wedge\omega$ is the inverse of the map $\omega\mapsto\exp(\Psi)\wedge\omega$. Moreover, $\Psi$ being even, the map $\omega\mapsto\exp(\Psi)\wedge\omega$ is even, that is, maps $\Omega^\text{even}(\mathcal{A},V)$ into $\Omega^\text{even}(\mathcal{A},V)$ and $\Omega^\text{odd}(\mathcal{A},V)$ into $\Omega^\text{odd}(\mathcal{A},V)$. By induction we have $\dif\Psi^{\wedge k}=k\Psi^{\wedge(k-1)}\wedge\dif\Psi$ for any $k\in\setN^*$, hence $\dif\exp(\Psi)=\exp(\Psi)\wedge\dif\Psi$. For any $\omega\in\Omega^\bullet(\mathcal{A},V)$, we compute that
	\begin{align*}
		{\dif_\nabla[\theta]}\big(\exp(\Psi)\wedge\omega\big)&=\dif_{\nabla}\big(\exp(\Psi)\wedge\omega\big)+\theta\wedge\exp(\Psi)\wedge\omega\\
		&=\dif\exp(\Psi)\wedge\omega+\exp(\Psi)\wedge\dif_\nabla\omega+\exp(\Psi)\wedge\theta\wedge\omega\\
		&=\exp(\Psi)\wedge\dif\Psi\wedge\omega+\exp(\Psi)\wedge\dif_{\nabla}\omega+\exp(\Psi)\wedge\theta\wedge\omega\\
		&=\exp(\Psi)\wedge\dif_{\nabla}[\theta+\dif\Psi](\omega),
	\end{align*}
	which has for consequence that $\varepsilon=\exp(\Psi)\wedge\cdot$ induces a $\setR$-linear isomorphism of $\setZ/2\setZ$-graded vector spaces in cohomology. Furthermore, it is also an isomorphism of graded $H^\bullet(\mathcal{A})$-modules, since we have  $\varepsilon([\alpha]\wedge[\omega])=[\alpha]\wedge\varepsilon([\omega])$ for any $[\alpha]\in H^p(\mathcal{A})$ and $[\omega]\in H^q(\mathcal{A};V,\nabla;\theta)$. Indeed, we compute that
	\begin{equation*}
		\varepsilon\big((\alpha+\dif\beta)\wedge(\omega+\dif_\nabla[\theta]\eta)\big)=\alpha\wedge\varepsilon(\omega)+{\dif_\nabla[\theta+\dif\Psi]}\Gamma,
	\end{equation*}
	where $\Gamma$ is defined by
	\begin{equation*}
		\Gamma=\exp(\Psi)\wedge\dif\beta\wedge\big(\omega+\dif_\nabla[\theta]\eta\big)+(-1)^p\exp(\Psi)\wedge\alpha\wedge\eta,
	\end{equation*}
	for any $\beta\in\Omega^{p-1}(\mathcal{A})$ and $\eta\in\Omega^{q-1}(\mathcal{A},V)$.
\end{proof}

The following corollary is directly deduced from the previous proposition.

\begin{corollary}\label{co:independence}
	Let $\mathcal{A}$ be a Lie algebroid on a manifold $M$, $(V\to M,\nabla)$ be a representation of $\mathcal{A}$, and $\theta\in\Omega^\text{odd}(\mathcal{A})$ be a $\dif$-closed odd differential form on $\mathcal{A}$. Then $H^\bullet(\mathcal{A};V,\nabla;\theta)$ depends only on the (Lie algebroid) cohomology class $[\theta]\in H^\text{odd}(\mathcal{A})=\bigoplus_{i\geq 0}H^{2i+1}(\mathcal{A})$ of $\theta$ and will be denoted by $H^\bullet(\mathcal{A};V,\nabla;[\theta])$.
\end{corollary}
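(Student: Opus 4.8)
The plan is to read the corollary off directly from the theorem. The theorem already supplies the isomorphism $H^\bullet(\mathcal{A};V,\nabla;\theta+\dif\Psi)\cong H^\bullet(\mathcal{A};V,\nabla;\theta)$ whenever $\Psi$ is \emph{even}, so the only point left to establish is that two $\dif$-closed odd forms representing the same class in $H^{\text{odd}}(\mathcal{A})$ necessarily differ by $\dif$ of an even form.

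First I would unwind the meaning of $[\theta]=[\theta']$ in $H^{\text{odd}}(\mathcal{A})=\bigoplus_{i\geq 0}H^{2i+1}(\mathcal{A})$. Equality in this direct sum means equality in each summand, that is $\theta_{2i+1}-\theta'_{2i+1}\in\Ima\big(\dif\colon\Omega^{2i}(\mathcal{A})\to\Omega^{2i+1}(\mathcal{A})\big)$ for every $i$. Choosing primitives $\gamma_{2i}\in\Omega^{2i}(\mathcal{A})$ with $\theta_{2i+1}-\theta'_{2i+1}=\dif\gamma_{2i}$ and setting $\Psi=\sum_{i\geq 0}\gamma_{2i}\in\Omega^{\text{even}}(\mathcal{A})$, I obtain $\theta'=\theta+\dif\Psi$ with $\Psi$ even. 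Feeding this $\Psi$ into the theorem then yields an isomorphism of graded $H^\bullet(\mathcal{A})$-modules $H^\bullet(\mathcal{A};V,\nabla;\theta')\cong H^\bullet(\mathcal{A};V,\nabla;\theta)$, which shows that the twisted cohomology is insensitive to the choice of representative of $[\theta]$ and legitimizes the notation $H^\bullet(\mathcal{A};V,\nabla;[\theta])$.

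The one step deserving attention is the interplay between the $\setZ$-grading and the $\setZ/2\setZ$-grading, namely the claim that the even form $\Psi$ above can always be found. This is exactly where the oddness of $\theta$ and $\theta'$ enters: $\dif$ raises degree by one and hence sends $\Omega^{\text{even}}(\mathcal{A})$ into $\Omega^{\text{odd}}(\mathcal{A})$, so if $\gamma$ is any form with $\dif\gamma=\theta-\theta'$ and $\gamma=\gamma_{\text{even}}+\gamma_{\text{odd}}$ is its decomposition, then $\dif\gamma_{\text{odd}}$ is even while $\theta-\theta'$ is odd, forcing $\theta-\theta'=\dif\gamma_{\text{even}}$; thus one may always take $\Psi$ even. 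Apart from this grading bookkeeping, the corollary is an immediate specialization of the theorem, so I expect no genuine obstacle.
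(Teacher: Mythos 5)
Your proposal is correct and takes essentially the same route as the paper, which offers no written argument beyond stating that the corollary "is directly deduced from the previous proposition"; your grading bookkeeping (two $\dif$-closed odd forms representing the same class in $H^{\text{odd}}(\mathcal{A})$ differ by $\dif$ of an \emph{even} form, so the theorem applies) is precisely the routine detail the paper suppresses. The only blemish is a harmless sign: from $\theta_{2i+1}-\theta'_{2i+1}=\dif\gamma_{2i}$ you get $\theta'=\theta-\dif\Psi$ rather than $\theta+\dif\Psi$, which is fixed by replacing $\Psi$ with $-\Psi$ (still even).
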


\subsection{Examples of twisted cohomologies}
\label{subsec:twisted-examples}

In this last section, we show that the twisted cohomology of a Lie algebroid $\mathcal{A}$ by an odd cocycle can been found in various places of the literature.

\begin{example}\label{ex:twisted-De-Rham}
	Let $M$ be a manifold and let $H^\bullet_{\textsf{DR}}(M)$ denote the De Rham cohomology of $M$. We are going to apply the corollary \ref{co:independence} to the Lie algebroid $\mathcal{T}_M$ (see example \ref{ex:canonical}). Let $\nabla$ be a flat linear connection on a vector bundle $V\to M$ and $\theta\in\Omega^\text{odd}(M)=\Omega^\text{odd}(\mathcal{T}_M)$ be a $\difDeRham$-closed odd differential form. Then the graded $H^\bullet_{\textsf{DR}}(M)$-module $H^\bullet(\mathcal{T}_M;V,\nabla;\theta)$ only depends on the De Rham cohomology class $[\theta]\in H^\text{odd}_{\textsf{DR}}(M)=H^\text{odd}(\mathcal{T}_M)$. This fact has been proven in \cite[section 1]{MR2838268} were the authors used twisted De Rham complexes to define a notion of twisted analytical torsion. In the case of the trivial representation (see example \ref{ex:trivial-representation}), the differential $\difDeRham+\theta\wedge\cdot$, where $\theta$ is a $\difDeRham$-closed differential form on $M$ of odd degree, has been used in various contexts like in twisted K-theory (see \cite[section 6]{MR2307274} and \cite[section2.1]{MR2365650} for a more topological point of view), the theory of hypergeometric functions \cite[section 2]{adolphson1997}, or in the study of hyperplane arrangements \cite{MR2159008}, and in generalized complex geometry \cite[section 2.1]{MR2811595}, where it can be seen as the natural differential to consider on an exact Courant algebroid with \v{S}evera class a $\difDeRham$-closed $3$-differential form $\theta$ on the base manifold $M$. 
\end{example}

\begin{example}
	Let $\mathcal{A}=(A\to M,\anchor,[\cdot,\cdot])$ be a Lie algebroid and let $[\theta]\in H^1(\mathcal{A})$. We can modify the trivial representation of $\mathcal{A}$ (see \ref{ex:trivial-representation}) by means of the cocycle $\theta$ as follows. We keep the trivial vector bundle $V=M\times\setR\to\setR$ of rank 1 but we now take for connection $\nabla_u\lambda=\anchor(u)\cdot\lambda+\lambda\theta(u)$. Then the differential ${\dif_\nabla[\theta]}$ plays a role in the definition of generalized Lie bialgebroids (see \cite[definition 2.4]{MR2047554} and \cite[section 3.1]{MR1862087}) and also in the study of Dirac structures of such generalized Lie bialgebroids \cite[section 4]{MR2047554}.
\end{example}

\begin{example}
	Let $\mathfrak{g}$ be a Lie algebra, $(V,\rho)$ a $\mathfrak{g}$-module and $\theta$ an element in $\Lambda^\bullet\mathfrak{g}^*$ of odd degree, $\dif_{\textsf{CE}}$-closed. Then the $H_\textsf{CE}(\mathfrak{g})$-module $H^\bullet(\mathfrak{g};V,\rho;\theta)$ only depends on the Chevalley-Eilenberg cohomology class $[\theta]\in H_\textsf{CE}(\mathfrak{g})$. This situation arises when one applies the example \ref{ex:twisted-De-Rham} to a compact simply connected Lie group (see \cite{Chevalley1948}).
\end{example}

\begin{example}
	Let $(M,\pi)$ be a Poisson manifold and consider the associated Lie algebroid $\mathcal{P}_M[\pi]$ (see example \ref{ex:Poisson}) and let $\dif_\pi$ denote its differential (see example \ref{ex:Lichnerowicz-Poisson-cohomology}). Let $R\in\Gamma(\Lambda^3 TM)$ be a $\dif_\pi$-closed $3$-vector field. Then the twisted differential $\dif_\pi+R\wedge\cdot$ is used in \cite[section 4.3]{AMSW} in relation with a Courant algebroid based on Poisson geometry. The associated twisted cohomology for the trivial representation of $\mathcal{P}_M[\pi]$ only depends on the Lichnerowicz-Poisson cohomology class $[R]\in H_\textsf{LP}^3(M,\pi)$.
\end{example}

\end{document}